\newtheorem{teo}{Theorem}
\newtheorem{lem}{Lemma}
\newtheorem{rem}{Remark}
\newtheorem{coro}{Corollary}
\begin{document}
	
	\title{New tools for the study of Bochner differential operators}
	
	\author{L.M. Anguas\thanks{The work of L.M. Anguas was partially supported by Agencia Estatal de Investigación, Ministerio de Ciencia e Innovación, Spain, under grant PID2019-106362GB-I00.}}
	\affil{Saint Louis University, Madrid Campus, Avenida del Valle 34, 2803 Madrid, Spain\\\texttt{luismiguel.anguas@slu.edu}}
	
	\author{D. Barrios Rolan\'{\i}a\thanks{ The work of D. Barrios Rolan\'{\i}a was partially supported by Agencia Estatal de Investigación, Ministerio de Ciencia e Innovación, Spain, under grant PID2021-122154NB-I00.}}
	\affil{D. Barrios Rolan\'{\i}a ETSI Industriales, Universidad Polit\'ecnica de Madrid, C/Jos\'e Guti\'errez Abascal 2, 28006 Madrid, Spain\\\texttt{dolores.barrios.rolania@upm.es}}
	
	\date{}

	\maketitle
	
	\begin{abstract}
		A sequence $\{\delta_n^{(k)}\}$ associated to a Bochner differential operator is introduced as an effective tool to study this kind of operators. Some properties of this sequence are proven and used to deduce that a particular operator leads to solutions of a bispectral problem. In addition, the inverse problem is studied; that is, given a sequence 
		$\{\lambda_n\}$ of complex numbers and a sequence
		$\{P_n\}$ of polynomials with complex coefficients, 
		$\deg{P_n}=n$, we find a necessary and sufficient condition for the existence of a Bochner differential operator that has those sequences as eigenvalues and eigenpolynomials, respectively. The mentioned condition also depends on 
		$\{\delta_n^{(k)}\}$.

	\end{abstract}
	\textbf{Keywords:} Differential operator, bispectral problem, polynomial eigenfunctions\\[2pt]
	
	\noindent\textbf{Mathematics Subject Classification (2020):} 33C45, 34L10, 39A70
	
	%\maketitle

	\section{Introduction}
	Throughout this work, we consider the ordinary differential operator of order $N$ defined as
	\begin{equation}
		L\equiv                
		\sum_{i=0}^Na_i(x)\partial_x^i
		\label{1}
	\end{equation}
	where 
	$a_i(x)
	$
	are complex polynomials in the variable $x$,
	$
	\deg (a_i)\leq i,
	$ and 
	$
	\partial^i_x,\, i=1,\ldots,N$, represents the derivative of order $i$ with respect to $x$. We call this operator, the \textit{Bochner differential operator}.
	We also consider a sequence of eigenfunctions
	$
	\{P_n\},
	$
	which we assume that are monic polynomials with $\deg(P_n)=n$ for each $n\in \mathbb{N}$. That is,
	\begin{equation}        
		\sum_{i=1}^Na_i(x)\partial_x^iP_n(x)=\lambda_nP_n(x)\,,\quad \forall n\in \mathbb{N},
		\label{11}
	\end{equation}
	where 
	$
	\{\lambda_n\}\subset \mathbb{C}
	$
	is the corresponding sequence of eigenvalues.
	The polynomials $\{P_n\}$ satisfying \eqref{11} are called {\em eigenpolynomials} in this work. 
	
	Although we study eigenpolynomials of \eqref{1} in general, we are especially interested in families of polynomials that are, at the same time, eigenfunctions of a certain difference operator $J$, satisfying
	\begin{equation}
		\label{discreto}
		\left(J(n)P_n\right)(x)=xP_n(x),
	\end{equation}
	where
	$$
	J(n)P_n=\sum_{k=n-p}^{n}\alpha_{n,k}P_k+P_{n+1}
	$$
	for a fixed $p\in\mathbb{N}$.
	This is equivalent to the fact that the polynomials 
	$\{P_n\}$  
	satisfy
	a $(p+2)$-term recurrence relation
	\begin{equation}
		\label{otro9}
		\left.
		\begin{array}{r}
			\displaystyle \sum_{k=n-p}^{n-1}\alpha_{n,k}P_k(x)+(\alpha_{n,n}-x)P_n(x)+P_{n+1} (x)= 0\,,\quad n=0,1,\ldots ,\\
			P_0=1,\quad P_{-1}=\cdots =P_{-p}=0.
		\end{array}
		\right\}
	\end{equation}
	If the polynomials
	$
	P_n
	$ 
	satisfy \eqref{11} and \eqref{discreto}, we say that the sequence
	$
	\{P_n\},\,n\in\mathbb N,
	$ 
	is a solution for the bispectral problem defined by $L$ and $J$. This kind of problem was studied for the first time by S. Bochner in 1929 \cite{Bochner}. He studied the case where the order of the differential operator is $N=2$, determining the sequences of classical orthogonal polynomials (Hermite, Jacobi and Laguerre) as the solutions of the bispectral problem with $p=1$ in \eqref{otro9}. Eleven years later, Krall \cite{Krall1}, \cite{Krall}, established a classification with seven families of eigenpolynomials in the case $N=4$. Since these relevant works of Bochner and Krall, several contributions were made about properties of eigenpolynomials of operators \eqref{1} and also about the bispectral problem (see, for example, \cite{Shapiro},  \cite{Shapiro2} \cite{Paco}). However, the study of the case of arbitrary order $N$ remains open. 
	
	In \cite{AngBar} we introduced the sequences 
	$\{\delta_n^{(k)}\},\,n\in \mathbb{N},$ for $k=0,1,\ldots,N,$ associated with the differential operator \eqref{1} of order $N$ as an auxiliary tool that helped us obtaining the explicit expression of the
	eigenvalues and eigenfunctions of a differential operator. The goal of this work is to present these sequences $\{\delta_n^{(k)}\}$ as a useful tool in the study of Bochner differential operators and its eigenpolynomials. In order to do this, we will first prove some properties that the sequences $\{\delta_n^{(k)}\}$ satisfy. In particular, we will obtain an expression for the coefficients of the eigenpolynomials in terms of a (small) amount of elements of the sequence $\{\delta_n^{(k)}\}$.
	
	Furthermore, we will use the sequence $\{\delta_n^{(k)}\}$ to prove that a particular Bochner differential operator of order $N$ leads to a sequence of eigenpolynomials satisfying an $(N+1)$-term recurrence relation. This operator appeared in \cite{Shapiro2} as one of the only two operators conjectured to give rise to sequences of eigenpolynomials satisfying an $(N+1)$-term recurrence relation; that is, as one of the only two operators whose eigenpolynomials are solutions of this bispectral problem. 
	
	Finally, we will use the sequence $\{\delta_n^{(k)}\}$ to obtain a necessary and sufficient condition for the existence of solution of the inverse problem. That is, given a sequence $\{\lambda_n\}$ of complex numbers and a sequence $\{P_n\}$ of polynomials with complex coefficients such that  
	$\deg{P_n}=n$ for each $n\in \mathbb{N}$, we will find a necessary and sufficient condition in terms of $\{\delta_n^{(k)}\}$ for the existence of a Bochner differential operator such that $\{\lambda_n\}$ and $\{P_n\}$ are its eigenvalues and eigenpolynomials, respectively.
	
	Before expanding on our main goals, we would like to introduce a result that explains the equivalence between the sequences 
	$\{\delta_n^{(k)}\}$
	and the coefficients of polynomials $a_i(x)$ defining \eqref{1}. 
	
	\begin{teo}
		\label{Lema2}
		Given two sequences $\{a_{n,i}\}, i=0,1,\dots,n,\, n\in \mathbb{N}$, and $\{\delta_n^{(k)}\},\ n=0,1,\dots,\ k=0,1,\dots,n,$ the following connections are equivalent
		\begin{enumerate}
			\item For each $n=0,1,\dots,$
			\begin{equation}
				\label{deltas_aes}
				\delta_n^{(k)}=\sum_{i=k}^{n} {n \choose i} i!a_{i,i-k}, \quad  k=0,1,\ldots, n\,.
			\end{equation}
			\item For each $n=0,1,\dots,$
			\begin{equation}
				\label{BB}
				n!a_{n,n-k}=\sum_{i=k}^{n} {n \choose i}(-1)^{n-i}\delta_i^{(k)}\quad \text{ for } k=0,1,\ldots, n\,.
			\end{equation}
		\end{enumerate}
	\end{teo}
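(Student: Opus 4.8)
The plan is to read Theorem~\ref{Lema2} as an instance of binomial inversion (a M\"obius\mbox{-}type inversion), performed separately for each fixed value of $k$. Observe first that both \eqref{deltas_aes} and \eqref{BB} decouple in $k$: once $k$ is fixed, \eqref{deltas_aes} expresses $\delta_n^{(k)}$ (for $n\ge k$) as a triangular linear combination of the numbers $i!\,a_{i,i-k}$, $k\le i\le n$, whereas \eqref{BB} expresses $n!\,a_{n,n-k}$ as a triangular linear combination of the $\delta_i^{(k)}$, $k\le i\le n$. Hence it suffices to show that these two triangular systems invert one another, and, since the computation is symmetric in the two directions, it is enough to carry out $(1)\Rightarrow(2)$ in detail and then obtain $(2)\Rightarrow(1)$ by the same manipulation with the roles of $\delta$ and $a$ exchanged.

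For $(1)\Rightarrow(2)$ I would fix $k$ and $n\ge k$, substitute \eqref{deltas_aes} into the right-hand side of \eqref{BB}, and interchange the order of the (finite) summations to get
$$\sum_{i=k}^{n}\binom{n}{i}(-1)^{n-i}\delta_i^{(k)}=\sum_{j=k}^{n}j!\,a_{j,j-k}\sum_{i=j}^{n}\binom{n}{i}\binom{i}{j}(-1)^{n-i}.$$
The inner sum is evaluated with the ``subset of a subset'' identity $\binom{n}{i}\binom{i}{j}=\binom{n}{j}\binom{n-j}{i-j}$ followed by the binomial theorem, namely $\sum_{i=j}^{n}\binom{n}{i}\binom{i}{j}(-1)^{n-i}=\binom{n}{j}(1-1)^{n-j}$, which vanishes unless $j=n$, in which case it equals $1$. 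Only the term $j=n$ then survives on the right, and it equals $n!\,a_{n,n-k}$, which is exactly \eqref{BB}. The implication $(2)\Rightarrow(1)$ is obtained in the same way: substituting \eqref{BB} into the right-hand side of \eqref{deltas_aes} and swapping sums yields $\sum_{j=k}^{n}\delta_j^{(k)}\sum_{i=j}^{n}\binom{n}{i}\binom{i}{j}(-1)^{i-j}$, and the inner sum again collapses to $0$ for $j<n$ and to $1$ for $j=n$ by the same identity (the sign is now $(-1)^{i-j}$, but $\sum_{i=j}^{n}\binom{n-j}{i-j}(-1)^{i-j}=(1-1)^{n-j}$ as well), leaving $\delta_n^{(k)}$.

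There is no genuine obstacle here; the argument is essentially bookkeeping. The only points requiring mild care are the summation limits --- every sum runs from $k$ rather than from $0$, so one must check that reversing the order of summation produces exactly the ranges $k\le j\le n$ and $j\le i\le n$ --- and the observation that $k$ is a passive spectator throughout, so that after the shift $i\mapsto i-k$ one is literally in the classical inversion lemma. Alternatively, one may simply quote the standard binomial inversion theorem after extending the sequences by $\delta_n^{(k)}:=0$ and $a_{n,n-k}:=0$ for $n<k$, which turns the pair \eqref{deltas_aes}--\eqref{BB} into the textbook equivalence $f(n)=\sum_{i=0}^{n}\binom{n}{i}g(i)\iff g(n)=\sum_{i=0}^{n}\binom{n}{i}(-1)^{n-i}f(i)$ with $f(n)=\delta_n^{(k)}$ and $g(n)=n!\,a_{n,n-k}$.
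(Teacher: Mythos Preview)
Your proof is correct. Both your argument and the paper's rest on the same orthogonality relation $\sum_{i=j}^{n}\binom{n}{i}\binom{i}{j}(-1)^{n-i}=[j=n]$, but the organization differs: the paper proceeds by induction on $n$, isolating the top term in \eqref{deltas_aes} (respectively \eqref{BB}) for $n=m+1$, applying the induction hypothesis for $n\le m$, and then invoking the identity \eqref{1_lema1} of Lemma~\ref{Lema1} to collapse the remaining sum. You instead substitute one relation directly into the other, swap the two finite sums, and kill the inner sum in one stroke via the subset-of-a-subset identity and $(1-1)^{n-j}$; you also point out that, with $f(n)=\delta_n^{(k)}$ and $g(n)=n!\,a_{n,n-k}$, the whole statement is literally classical binomial inversion. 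Your route is shorter and more conceptual---no induction is needed and the two implications become visibly symmetric---while the paper's inductive scheme keeps the proof self-contained with Lemma~\ref{Lema1} and mirrors the style used elsewhere in the article.
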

	The proof of Theorem \ref{Lema2} is included in Section \ref{auxiliares} along with some auxiliary results used in this paper .
	
	Theorem \ref{Lema2} provides the key to define and use the sequences 
	$\{\delta_n^{(k)}\}$
	from the differential operator
	$L$
	given in \eqref{1}. Thus, in Section \ref{directo} the direct problem is addressed and, using \eqref{deltas_aes}, the eigenpolynomials and eigenvalues of $L$ are obtained. Moreover, in this section we will prove that, for each
	$
	k=0,1,\ldots, N,
	$ 
	the sequence
	$\{\delta_n^{(k)}\},\,n\in\mathbb{N},$
	is obtained from the $N-k+1$ first terms
	$\delta_k^{(k)},\,\delta_{k+1}^{(k)},\ldots, \delta_N^{(k)}.$
	
	In Section \ref{ejemplo}, we will use the tools developed in Section \ref{directo} to address the conjecture from \cite{Shapiro2} about the operators that lead to solutions of the bispectral problems.
	
	On the other hand, \eqref{BB} makes possible to define the coefficients 
	$a_i(x)$
	of the differential operator \eqref{1} using the sequences 
	$\{\delta_n^{(k)}\}$.
	This will be done in Section \ref{inverso}, where the inverse problem is approached. 
	
	To finish, we will summarize our conclusions and express some ideas about possible future works in Section \ref{Conclusiones}.

	\section{Auxiliary results }
	\label{auxiliares}
	
	In order to prove our main results, we need several lemmas related to Combinatorial Analysis. The proofs of lemmas \ref{Lema1} and \ref{Lema33} are simple exercises that are omitted. The rest of the proofs are not so simple and are included to make for easier reading.
	
	Here and in the rest of our paper, we assume
	$
	\displaystyle\binom r s =0
	$
	when 
	$r<s$  or
	$s<0.$ 
	
	\begin{lem}
		\label{Lema1}
		For $m=0,1,\ldots$ we have
		\begin{eqnarray}
			\sum_{r=k}^{m+1}(-1)^{r-k}\binom{r}{k}\binom{m+1}{r}=0,& k=0,1,\dots,m.\label{1_lema1}\\\nonumber\\
			\sum_{r=0}^m \binom{m+k+1}{r}(-1)^r=(-1)^m\displaystyle\binom{m+k}k, & k=0,1\dots.\label{2_lema1}
		\end{eqnarray}
	\end{lem}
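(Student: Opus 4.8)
The plan is to handle the two identities independently, each with one well-chosen reindexing; both are elementary consequences of standard binomial-coefficient manipulations under the convention $\binom{r}{s}=0$ for $r<s$ or $s<0$ adopted just above.

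For \eqref{1_lema1}, I would start from the subset-of-a-subset identity
\[
\binom{m+1}{r}\binom{r}{k}=\binom{m+1}{k}\binom{m+1-k}{r-k},
\]
which is valid for every $r$ under the stated conventions. Factoring $\binom{m+1}{k}$ out of the sum and substituting $j=r-k$ rewrites the left-hand side of \eqref{1_lema1} as $\binom{m+1}{k}\sum_{j=0}^{m+1-k}(-1)^{j}\binom{m+1-k}{j}$. The inner sum is the binomial expansion of $(1-1)^{m+1-k}$, hence vanishes as soon as $m+1-k\ge 1$; since the hypothesis is $k\le m$, the exponent is at least $1$, which proves \eqref{1_lema1}. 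A second route: reading $\binom{r}{k}$ as a polynomial in $r$ of degree $k$, the sum is, up to sign, the $(m+1)$-st finite difference of that polynomial evaluated at $0$, and an $(m+1)$-st finite difference annihilates every polynomial of degree $\le m$, in particular of degree $k\le m$.

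For \eqref{2_lema1}, I would first establish the partial alternating-sum formula
\[
\sum_{r=0}^{m}(-1)^{r}\binom{M}{r}=(-1)^{m}\binom{M-1}{m},\qquad M\ge 1,
\]
and then specialize $M=m+k+1$, using the symmetry $\binom{m+k}{m}=\binom{m+k}{k}$ to match the right-hand side of \eqref{2_lema1} exactly. The auxiliary formula comes most cleanly out of Pascal's rule $\binom{M}{r}=\binom{M-1}{r}+\binom{M-1}{r-1}$: splitting the sum accordingly and reindexing the second part by $s=r-1$ leaves the telescoping difference $\sum_{r=0}^{m}(-1)^{r}\binom{M-1}{r}-\sum_{s=0}^{m-1}(-1)^{s}\binom{M-1}{s}=(-1)^{m}\binom{M-1}{m}$. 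Equivalently one can prove it by induction on $m$, with a trivial base case $m=0$ and an inductive step again using Pascal's rule.

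I do not expect a genuine obstacle: the content is routine, and the only point requiring care is the bookkeeping at the ends of the summation ranges, where the convention $\binom{r}{s}=0$ for $r<s$ or $s<0$ is used tacitly (for instance to let the sum in \eqref{1_lema1} start at $r=k$, or to drop the $s=-1$ term in the telescoping step). It is also worth noting that the restriction $k\le m$ in \eqref{1_lema1} is essential — for $k=m+1$ the single surviving term gives $1$, not $0$ — so any proof must use that hypothesis somewhere, as the two proposed arguments do.
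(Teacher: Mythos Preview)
Your arguments for both identities are correct and cleanly written; the subset-of-a-subset reduction for \eqref{1_lema1} and the Pascal-rule telescoping for \eqref{2_lema1} are exactly the kind of elementary manipulations one expects here, and the bookkeeping remarks at the end are apt. Note, however, that the paper does not give its own proof of this lemma: it states explicitly that the proofs of Lemmas~\ref{Lema1} and~\ref{Lema33} ``are simple exercises that are omitted.'' So there is nothing to compare against beyond the authors' implicit claim that the content is routine, which your write-up confirms.
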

	
	\begin{lem}
		For 
		\label{Lema33}
		$k,\,m\in \mathbb{N}$ we have
		\begin{equation}
			\label{kym}
			\sum_{j=0}^{k-1} \binom{k-1}{j}\frac{(-1)^j}{j+m+1}=\frac{1}{k\displaystyle\binom{m+k}k}.
		\end{equation}
	\end{lem}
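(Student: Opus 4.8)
The plan is to recognize the left-hand side of \eqref{kym} as a Beta integral. The key observation is the elementary identity
\[
\frac{1}{j+m+1}=\int_0^1 t^{\,j+m}\,dt,
\]
valid for every $j\ge 0$ and $m\ge 0$. Substituting it into the (finite) sum, interchanging summation with integration, and applying the binomial theorem gives
\[
\sum_{j=0}^{k-1}\binom{k-1}{j}\frac{(-1)^j}{j+m+1}
=\int_0^1 t^{m}\sum_{j=0}^{k-1}\binom{k-1}{j}(-t)^{j}\,dt
=\int_0^1 t^{m}(1-t)^{k-1}\,dt.
\]

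The next step is to evaluate this integral. It is the classical Beta function $B(m+1,k)=\dfrac{m!\,(k-1)!}{(m+k)!}$; if one prefers not to quote this value, a short induction on $k$ (integrating $t^{m}(1-t)^{k-1}$ by parts, peeling off one factor of $(1-t)$ at a time) produces the same closed form. Finally I would reconcile this with the right-hand side of \eqref{kym}: from $\binom{m+k}{k}=\dfrac{(m+k)!}{k!\,m!}$ one gets $k\binom{m+k}{k}=\dfrac{(m+k)!}{(k-1)!\,m!}$, hence $\dfrac{1}{k\binom{m+k}{k}}=\dfrac{(k-1)!\,m!}{(m+k)!}=B(m+1,k)$, which closes the argument. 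The edge case $m=0$ (so that $t^{m}=1$) is covered without any modification.

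There is essentially no serious obstacle here; the only points requiring a word of care are that the interchange of sum and integral is legitimate (trivially so, the sum being finite) and that the convention for $\mathbb{N}$ includes $0$ so that $t^{m}$ makes sense for $m=0$. An alternative, purely discrete, route avoids integration altogether: writing the left-hand side as $(-1)^{k-1}\Delta^{k-1}g(0)$ for the sequence $g(j)=1/(j+m+1)$, where $\Delta$ is the forward-difference operator, one checks by induction that
\[
\Delta^{n}g(j)=\frac{(-1)^{n}\,n!}{(j+m+1)(j+m+2)\cdots(j+m+n+1)},
\]
and evaluating at $j=0$ with $n=k-1$ yields the same value $\dfrac{(k-1)!\,m!}{(m+k)!}$. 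I would present the Beta-integral argument as the main proof, since it is the shortest, and note the difference-operator version as an alternative.
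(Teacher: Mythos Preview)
Your proof is correct. The Beta-integral argument is the standard way to handle this identity: the substitution $\frac{1}{j+m+1}=\int_0^1 t^{j+m}\,dt$, the binomial collapse to $\int_0^1 t^m(1-t)^{k-1}\,dt$, and the identification with $B(m+1,k)=\frac{m!\,(k-1)!}{(m+k)!}=\frac{1}{k\binom{m+k}{k}}$ are all valid, and the alternative finite-difference computation you sketch also works.

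There is nothing to compare against in the paper: the authors explicitly omit the proof of this lemma, calling it a simple exercise. One small remark: the statement as written allows $k\in\mathbb{N}$ with the paper's convention $\mathbb{N}=\{0,1,2,\ldots\}$, but the right-hand side is undefined at $k=0$; your argument tacitly assumes $k\ge 1$, which is the only case used later (in the proof of Lemma~\ref{lema3}), so this is harmless.
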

	
	\begin{lem}
		\label{nuevo_lema2}
		For 
		$n\in\mathbb{N},\ q=0,1,\ldots, n$,
		and
		$r=0,1,\ldots,$
		we have
		\begin{equation}
			\label{8_tilde}
			\sum_{s=0}^{q}(-1)^{s}\binom{q}{s}\binom{n-q+s}{r}=
			\left\{
			\begin{array}{lll}
				0&,&\text{if } r<q\\\\
				(-1)^q\displaystyle\binom{n-q}{r-q}&,& \text{if }r\ge q.
			\end{array}
			\right.
		\end{equation}
	\end{lem}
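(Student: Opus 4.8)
The identity to establish is
\[
\sum_{s=0}^{q}(-1)^{s}\binom{q}{s}\binom{n-q+s}{r}=
\begin{cases}
0 & \text{if } r<q,\\[4pt]
(-1)^q\dbinom{n-q}{r-q} & \text{if } r\ge q.
\end{cases}
\]
The plan is to treat the left-hand side as the result of applying the $q$-th finite-difference operator to the polynomial-like quantity $\binom{n-q+s}{r}$ in the variable $s$, and then to recognize that differencing kills low-degree behaviour and shifts the binomial index. Concretely, I would first rewrite $\binom{n-q+s}{r}$ using the Vandermonde/Chu convolution in order to separate the $s$-dependence: write $n-q+s = (n-q)+s$, so that
\[
\binom{(n-q)+s}{r}=\sum_{t=0}^{r}\binom{n-q}{r-t}\binom{s}{t}.
\]
Substituting this into the sum and exchanging the order of summation reduces the problem to evaluating, for each fixed $t$,
\[
\binom{n-q}{r-t}\sum_{s=0}^{q}(-1)^{s}\binom{q}{s}\binom{s}{t}.
\]

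The inner sum over $s$ is the standard alternating identity $\sum_{s}(-1)^s\binom{q}{s}\binom{s}{t} = (-1)^{q}\,[t=q]$ (which itself follows from $\binom{q}{s}\binom{s}{t}=\binom{q}{t}\binom{q-t}{s-t}$ together with the fact that $\sum_{j}(-1)^{j}\binom{q-t}{j}$ vanishes unless $q=t$, and equals $(-1)^{t}$ in that case, up to the bookkeeping sign). Hence only the term $t=q$ survives, leaving $(-1)^{q}\binom{n-q}{r-q}$. When $r<q$ the coefficient $\binom{n-q}{r-q}$ is zero by the convention stated just before Lemma \ref{Lema1} (since $r-q<0$), which yields the first case; when $r\ge q$ it gives the second case directly. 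This handles both branches uniformly.

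As an alternative route, should one prefer to avoid the double Chu–Vandermonde manipulation, I would use the generating-function identity $\sum_{m\ge 0}\binom{m}{r}x^{m}=x^{r}/(1-x)^{r+1}$: the left-hand side is the coefficient of $x^{n-q+q}=x^{n}$ — after shifting — in $x^{n-q}(1-x)^{-(r+1)}(1-x)^{q}\cdot x^{0}$, i.e. in $x^{n-q}(1-x)^{q-r-1}$, where the factor $(1-x)^{q}$ comes from $\sum_{s}(-1)^s\binom{q}{s}x^{s}$. Extracting $[x^{r}]\,(1-x)^{q-r-1}=(-1)^{r}\binom{q-r-1}{r}$ and then simplifying via the upper-negation rule $\binom{q-r-1}{r}=(-1)^{r}\binom{n-q}{r-q}$-type identities recovers the claim; the sign bookkeeping is the only delicate point.

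The main obstacle I anticipate is purely the sign and index-convention accounting: one must be careful that the convention $\binom{r}{s}=0$ for $r<s$ or $s<0$ is invoked at exactly the right places so that the ``$r<q$'' case genuinely collapses to $0$, and that the alternating-sum evaluation $\sum_{s}(-1)^s\binom{q}{s}\binom{s}{t}$ is applied with the correct range of $s$ (it is harmless to extend $s$ beyond $q$ since $\binom{q}{s}=0$ there, and harmless to start $t$ from $0$ since $\binom{s}{t}$ handles it). No deep idea is needed beyond the two classical binomial identities; the work is in presenting the reduction cleanly.
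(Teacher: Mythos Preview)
Your primary argument via Vandermonde is correct and complete: expanding $\binom{(n-q)+s}{r}=\sum_{t}\binom{n-q}{r-t}\binom{s}{t}$, swapping sums, and invoking $\sum_{s}(-1)^s\binom{q}{s}\binom{s}{t}=(-1)^q[t=q]$ gives the result in both cases at once, with the convention $\binom{n-q}{r-q}=0$ for $r<q$ handling the first branch. The alternative generating-function sketch is a bit garbled as written (the coefficient-extraction bookkeeping does not quite line up), but it is not needed.

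This is a genuinely different route from the paper's proof. The paper argues by induction on $q$: it checks $q=0,1$ by hand and then, for the step from $\tilde q$ to $\tilde q+1$, splits $\binom{\tilde q+1}{s}=\binom{\tilde q}{s}+\binom{\tilde q}{s-1}$ via Pascal's rule, obtaining two sums to which the induction hypothesis applies (one with $n$ replaced by $n-1$), followed by a small case analysis on $r$ versus $\tilde q$. Your approach is shorter and more conceptual, packaging the whole computation into two standard identities with no case analysis in the derivation itself; the paper's induction is more self-contained in that it uses nothing beyond Pascal's rule, at the cost of some bookkeeping across the three subcases $r\ge \tilde q+1$, $r<\tilde q$, and $r=\tilde q$.
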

	
	\begin{proof}
		Let
		$n\in\mathbb{N}$
		be fixed. We will prove \eqref{8_tilde} by induction on 
		$q\leq n$. 
		\begin{itemize}
			\item[(a)]
			For $q=0$, \eqref{8_tilde} is obvious. For
			$q=1$,
			$$\sum_{s=0}^{1}(-1)^{s}\binom{1}{s}\binom{n-1+s}{r}=\binom{n-1} r-\binom {n} r.$$
			
			If
			$r<q=1$, necessarily 
			$r=0$ and
			$\binom{n-1} r-\binom {n} r=0$. 
			
			If
			$r\ge q=1$ and $r\leq n$ then 
			$\binom{n-1} r-\binom {n} r=-\binom{n-1}{r-1}$,
			which is \eqref{8_tilde}.
			
			If
			$r\ge q=1$ and $r> n$ then also
			$r> n-1$
			and
			$\binom{n-1} r-\binom {n} r=0=-\binom{n-1}{r-1}$, which is \eqref{8_tilde} again.
			\item[(b)]
			We assume that \eqref{8_tilde} holds for 
			$q=1,2,\ldots, \tilde q$ with
			$\tilde q<n$. Then
			$\tilde q+1\le n$ and
			\begin{eqnarray}
				\label{nueve}
				&&\sum_{s=0}^{\tilde q+1}(-1)^{s}\binom{\tilde q+1}{s}\binom{n-\tilde q-1+s}{r}=
				\sum_{s=0}^{\tilde q+1}(-1)^{s}\left[  \binom{\tilde q}{s}+\binom{\tilde q}{s-1} \right]\binom{n-\tilde q-1+s}{r}\nonumber\\
				&=&\sum_{s=0}^{\tilde q}(-1)^{s}\binom{\tilde q}{s}\binom{n-\tilde q-1+s}{r}-\sum_{s=1}^{\tilde q+1}(-1)^{s-1}\binom{\tilde q}{s-1}\binom{n-\tilde q-1+s}{r}\nonumber\\
				&=&\sum_{s=0}^{\tilde q}(-1)^{s}\binom{\tilde q}{s}\binom{(n-1)-\tilde q+s}{r}-\sum_{s=0}^{\tilde q}(-1)^{s}\binom{\tilde q}{s}\binom{n-\tilde q+s}{r}
			\end{eqnarray}
			(where, as usual,
			$
			\binom{\tilde q}{j}=0
			$ 
			if $j>\tilde q$ or $j<0$).
			
			If
			$r\geq \tilde q+1$
			then also 
			$r\geq \tilde q$
			and applying \eqref{8_tilde} in both terms of \eqref{nueve} we obtain
			$$
			\sum_{s=0}^{\tilde q+1}(-1)^{s}\binom{\tilde q+1}{s}\binom{n-\tilde q-1+s}{r}=(-1)^{\tilde q+1}\binom{n-(\tilde q+1)}{r- (\tilde q +1)},
			$$
			which is  \eqref{8_tilde} substituting 
			$q$ by
			$\tilde q+1$.
			
			If
			$r< \tilde q$
			then, applying \eqref{8_tilde} to $\tilde q$, both terms of \eqref{nueve} are $0$ and we have 
			$$
			\sum_{s=0}^{\tilde q+1}(-1)^{s}\binom{\tilde q+1}{s}\binom{n-\tilde q-1+s}{r}=0.
			$$
			Finally, if 
			$r=\tilde q$, \eqref{nueve} is equal to
			$$
			\sum_{s=0}^{\tilde q+1}(-1)^{s}\binom{\tilde q+1}{s}\binom{n-\tilde q-1+s}{r}=(-1)^{\tilde q}-(-1)^{\tilde q}=0
			$$
			because
			$
			\displaystyle\binom{(n-1)-\tilde q}{r-\tilde q}=\binom{n-\tilde q}{r-\tilde q}=1.
			$
			
		\end{itemize}
	\end{proof}
	
	\begin{lem}
		\label{lema3}
		For 
		$k,\,n,\,m,\,r \in\mathbb{N},\ r\leq m\leq n $
		and
		$k\leq n-m$
		we have
		\begin{equation}
			\label{10_tilde}
			\sum_{s=1}^{m+1}\frac{s}{s+k}(-1)^{s}\binom{m+1}{s}\binom{n-m+s}{r}=
			-\frac{\displaystyle\binom{n-m-k}{r}}{\displaystyle\binom{m+k+1}{k}}
		\end{equation}
	\end{lem}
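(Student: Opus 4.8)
The plan is to peel off from the summand the part already governed by the finite-difference identity of Lemma~\ref{nuevo_lema2}, and to treat the remaining genuinely rational part by an integral (Beta-function) representation of $1/(s+k)$ combined with two Chu--Vandermonde convolutions. Write $\frac{s}{s+k}=1-\frac{k}{s+k}$, so that the left-hand side of \eqref{10_tilde} equals $A-kS$, where
\[
A:=\sum_{s=1}^{m+1}(-1)^{s}\binom{m+1}{s}\binom{n-m+s}{r},\qquad
S:=\sum_{s=1}^{m+1}\frac{(-1)^{s}}{s+k}\binom{m+1}{s}\binom{n-m+s}{r}.
\]
Adjoining the $s=0$ term to $A$ turns it into
\[
A=\sum_{s=0}^{m+1}(-1)^{s}\binom{m+1}{s}\binom{(n+1)-(m+1)+s}{r}-\binom{n-m}{r};
\]
since $r\le m<m+1$, Lemma~\ref{nuevo_lema2} applied with its ``$n$'' taken to be $n+1$ and its ``$q$'' taken to be $m+1$ (legitimate because $m\le n$) shows the first sum vanishes, hence $A=-\binom{n-m}{r}$. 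In particular, for $k=0$ this is already \eqref{10_tilde}, so from now on I assume $k\ge1$.

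For $S$ I would again adjoin the $s=0$ term and use $\frac1{s+k}=\int_0^1 t^{s+k-1}\,dt$, which gives
\[
S+\tfrac1k\binom{n-m}{r}=\int_0^1 t^{k-1}\Big(\sum_{s=0}^{m+1}(-1)^{s}\binom{m+1}{s}t^{s}\binom{n-m+s}{r}\Big)dt .
\]
Expanding $\binom{n-m+s}{r}=\sum_{j}\binom{n-m}{r-j}\binom{s}{j}$ by Vandermonde and using the subset-of-a-subset identity $\binom{m+1}{s}\binom{s}{j}=\binom{m+1}{j}\binom{m+1-j}{s-j}$, the inner $s$-sum becomes, after the shift $s\mapsto s-j$, a plain binomial expansion, so the whole bracket collapses to $\sum_{j}\binom{n-m}{r-j}\binom{m+1}{j}(-t)^{j}(1-t)^{m+1-j}$. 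Integrating term by term with $\int_0^1 t^{k+j-1}(1-t)^{m+1-j}\,dt=\frac{(k+j-1)!\,(m+1-j)!}{(k+m+1)!}$, and then simplifying via $\binom{m+1}{j}(m+1-j)!=(m+1)!/j!$ and $(k+j-1)!/j!=(k-1)!\binom{k+j-1}{j}$, one obtains
\[
S+\tfrac1k\binom{n-m}{r}=\frac{(m+1)!\,(k-1)!}{(k+m+1)!}\sum_{j}(-1)^{j}\binom{k+j-1}{j}\binom{n-m}{r-j}.
\]
By upper negation $(-1)^{j}\binom{k+j-1}{j}=\binom{-k}{j}$, so a second Vandermonde convolution gives $\sum_{j}\binom{-k}{j}\binom{n-m}{r-j}=\binom{n-m-k}{r}$; combining this with $\frac{(m+1)!\,(k-1)!}{(k+m+1)!}=\frac{1}{k\binom{m+k+1}{k}}$ yields $S=\frac1k\big(\binom{n-m-k}{r}/\binom{m+k+1}{k}-\binom{n-m}{r}\big)$. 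Finally, substituting $A=-\binom{n-m}{r}$ and this value of $S$ into $A-kS$, the $\binom{n-m}{r}$ terms cancel and what remains is precisely $-\binom{n-m-k}{r}/\binom{m+k+1}{k}$, which is \eqref{10_tilde}.

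The whole argument is routine except for the two binomial convolutions and the factorial bookkeeping; the single point that needs real care is the paper's convention that a binomial coefficient vanishes outside its natural range, and this is exactly what the hypotheses $r\le m\le n$ and $k\le n-m$ are for: they force $n-m-k\ge0$ and keep every intermediate upper index nonnegative (the only negative one, $\binom{-k}{j}$, being rewritten immediately), so the polynomial Vandermonde/Beta identities used above are consistent with that convention. If one wishes to remain strictly within ``Combinatorial Analysis'', the lone appearance of the integral $\int_0^1 t^{s+k-1}\,dt$ can be removed by invoking instead the finite identity $\frac1{s+k}=\frac{s!\,(k-1)!}{(s+k)!}\binom{s+k-1}{s}$ (the combinatorial counterpart of the Beta integral underlying Lemma~\ref{Lema33}), at the price of a longer manipulation; I expect the integral route to be the cleanest to write out in detail, with the factorial simplification right after the Beta integral being where a slip is most likely.
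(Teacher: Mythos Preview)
Your proof is correct. Both you and the paper begin identically: split $\tfrac{s}{s+k}=1-\tfrac{k}{s+k}$ and dispatch the polynomial piece $A$ via Lemma~\ref{nuevo_lema2}. The divergence is in the rational piece $S$. The paper stays purely combinatorial: it rewrites $\binom{m+1}{s}$ through a further instance of Lemma~\ref{nuevo_lema2}, reshuffles the resulting double sum, applies Lemma~\ref{nuevo_lema2} twice more to kill or evaluate inner sums, and finishes with Lemma~\ref{Lema33}. Your route replaces that chain by the Beta-integral representation of $1/(s+k)$ together with two Vandermonde convolutions and the subset-of-a-subset identity; this is shorter, more mechanical, and makes the appearance of $\binom{m+k+1}{k}$ in the denominator transparent (it is literally the Beta normalisation), but it imports an analytic tool into an otherwise discrete argument. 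You correctly flag the one delicate point---the brief passage through $\binom{-k}{j}$ lies outside the paper's convention and must be read as the polynomial Vandermonde, which is legitimate since the final $\binom{n-m-k}{r}$ has nonnegative upper index by the hypothesis $k\le n-m$. Your suggested de-analytification via $\tfrac{1}{s+k}=\tfrac{s!\,(k-1)!}{(s+k)!}$ would bring the argument back in line with the paper's toolkit, and indeed Lemma~\ref{Lema33} is precisely the combinatorial shadow of the Beta integral you use.
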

	
	\begin{proof}
		Writing 
		$
		\displaystyle\frac{s}{s+k}=1-\frac{k}{s+k}$, the left hand side of \eqref{10_tilde} is
		\begin{eqnarray}
			\label{once}
			&&\sum_{s=1}^{m+1}\frac{s}{s+k}(-1)^{s}\binom{m+1}{s}\binom{n-m+s}{r}\nonumber\\
			& = & \sum_{s=1}^{m+1}(-1)^{s}\binom{m+1}{s}\binom{n-m+s}{r}-k\sum_{s=1}^{m+1}\frac{(-1)^{s}}{s+k}\binom{m+1}{s}\binom{n-m+s}{r}.
		\end{eqnarray}
		Since 
		$
		r<m+1$,
		we can apply \eqref{8_tilde} (for $q=m+1$) and the first addend of \eqref{once} leads to

		\begin{eqnarray}
			\label{doce}
			&&\sum_{s=1}^{m+1}(-1)^{s}\binom{m+1}{s}\binom{n-m+s}{r}=
			\sum_{s=1}^{m+1}(-1)^{s}\binom{m+1}{s}\binom{(n+1)-(m+1)+s}{r}\nonumber\\
			&=&0-\binom{m+1}{0}\binom{(n+1)-(m+1)}{r}=-\binom{n-m}{r}.
		\end{eqnarray}
		Furthermore, if we use
		$q=k-1,\,n=m+k$, and $r=s+k-1$,
		in \eqref{8_tilde}, we obtain
		$$
		\binom{m+1}{s}=\sum_{j=0}^{k-1}\binom{k-1}{j}\binom{m+j+1}{s+k-1}(-1)^{j+k-1}.
		$$
		Replacing it in the last term of \eqref{once},
		\begin{eqnarray}
			\label{12_tilde}
			&&\displaystyle\sum_{s=1}^{m+1}\frac{(-1)^{s}}{s+k}\binom{m+1}{s}\binom{n-m+s}{r}=\nonumber \\
			&=&\sum_{s=1}^{m+1}\binom{n-m+s}{r}\left[\sum_{j=0}^{k-1}\binom{k-1}{j}\binom{m+j+1}{s+k-1}\frac{(-1)^{s+j+k-1}}{s+k}\right]\nonumber\\
			&=&\sum_{j=0}^{k-1}\binom{k-1}{j}\frac{(-1)^{j+k-1}}{m+j+2}\left[\sum_{s=1}^{m+1}\binom{m+j+2}{k+s}\binom{n-m+s}{r}(-1)^{s}\right],
		\end{eqnarray}
		where, from Lemma \ref{nuevo_lema2}, the inner sum is
		\begin{eqnarray}
			\label{trece}
			&&\sum_{s=1}^{m+1}\binom{m+j+2}{k+s}\binom{n-m+s}{r}(-1)^{s}=\nonumber\\
			&=&(-1)^k\sum_{l=k+1}^{m+j+2}\binom{m+j+2}{l}\binom{n-k-m+l}{r}(-1)^{l}\nonumber\\
			&=&\sum_{l=0}^{k}\binom{m+j+2}{l}\binom{n-k-m+l}{r}(-1)^{k+l+1}.
		\end{eqnarray}
		Replacing \eqref{trece} in \eqref{12_tilde} and the result of this, along with \eqref{doce}, in \eqref{once}, we arrive to
		\begin{eqnarray}
			\label{catorce}
			&&\sum_{s=1}^{m+1}\frac{s}{s+k}(-1)^{s}\binom{m+1}{s}\binom{n-m+s}{r}=\\
			&=&-\binom{n-m}{r}-k
			\sum_{j=0}^{k-1}\frac{(-1)^{j+k-1}}{m+j+2}\binom{k-1}{j}\left[\sum_{s=1}^{m+1}(-1)^{s}\binom{m+j+2}{k+s}\binom{n-m+s}{r}\right]\nonumber\\
			&=&-\binom{n-m}{r}-k
			\sum_{s=0}^{k}(-1)^{s}\binom{n-k-m+s}{r}\left[\sum_{j=0}^{k-1}\frac{(-1)^{j}}{m+j+2}\binom{k-1}{j}\binom{m+j+2}{s}\right],\nonumber
		\end{eqnarray}
		where, for 
		$s>0$,
		from Lemma \ref{nuevo_lema2} (with
		$q=k-1,\,r=s-1$ 
		and
		$n=N+k$),
		we have
		\begin{equation}
			\label{quince}
			\sum_{j=0}^{k-1}\frac{(-1)^{j}}{m+j+2}\binom{k-1}{j}\binom{m+j+2}{s}=\frac{1}{s}\sum_{j=0}^{k-1}(-1)^{j}\binom{k-1}{j}\binom{m+j+1}{s-1}=0
		\end{equation}
		because
		$
		j<k.
		$
		Moreover, again from Lemma \ref{nuevo_lema2}, for
		$s=k$
		we see
		\begin{equation}
			\label{dieciseis}
			\sum_{j=0}^{k-1}\frac{(-1)^{j}}{m+j+2}\binom{k-1}{j}\binom{m+j+2}{s}=\frac{1}{k}\sum_{j=0}^{k-1}(-1)^{j}\binom{k-1}{j}\binom{m+j+1}{k-1}=\frac{(-1)^{k-1}}{k}.
		\end{equation}
		Then, taking into account \eqref{quince} and \eqref{dieciseis}, in \eqref{catorce} we obtain
		\begin{equation}
			\label{diecisiete}
			\sum_{s=1}^{m+1}\frac{s}{s+k}(-1)^{s}\binom{m+1}{s}\binom{n-m+s}{r}=-k\binom{n-k-m}{r}\sum_{j=0}^{k-1}\frac{(-1)^{j}}{m+j+2}\binom{k-1}{j}.
		\end{equation}
		Finally, using \eqref{kym} for $m=m+1$ in \eqref{diecisiete}, we arrive to \eqref{10_tilde}.
		
	\end{proof}

	Once we have introduced these auxiliary results, we can finally prove the result stated in the introduction, namely, Theorem \ref{Lema2}.
	
	\noindent
	{\it Proof of Theorem \ref{Lema2}. }
	We proceed by induction on $n$.
	
	For $n=0$ we have $a_{0,0}=\delta_0^{(0)}$ in \eqref{deltas_aes} and \eqref{BB}.
	
	Suppose \eqref{deltas_aes} holds for $n=0,1,\dots$, and assume that \eqref{BB} is satisfied for $n=0,1,\dots,m$, with $m\in\mathbb{N}$. Taking $n=m+1$ in \eqref{deltas_aes}, we have
	$$\delta_{m+1}^{(k)}=\sum_{i=k}^{m+1}\binom{m+1}{i}i!a_{i,i-k}=(m+1)!a_{m+1,m-k+1}+\sum_{i=k}^{m}\binom{m+1}{i}i!a_{i,i-k}$$
	for $k=0,1,\dots,m+1$. Therefore,
	\begin{align}
		\label{eq-aux-1}\nonumber
		(m+1)!a_{m+1,m-k+1} & = \delta_{m+1}^{(k)}-\sum_{i=k}^m\binom{m+1}{i}\left[\sum_{s=k}^i\binom{i}{s}(-1)^{i-s}\delta_s^{(k)}\right]\\ 
		& = \delta_{m+1}^{(k)}-\sum_{s=k}^m\left[\sum_{i=s}^m\binom{m+1}{i}\binom{i}{s}(-1)^{i-s}\right]\delta_s^{(k)}.
	\end{align}
	Applying \eqref{1_lema1},
	\begin{equation}
		\label{eq-aux-2}
		\sum_{i=s}^m\binom{m+1}{s}\binom{i}{s}(-1)^{i-s}=-\binom{m+1}{s}(-1)^{m+1-s}.
	\end{equation}
	Then, from \eqref{eq-aux-1},
	$$(m+1)!a_{m+1,m-k+1}=\delta_{m+1}^{(k)}+\sum_{s=k}^m\binom{m+1}{s}(-1)^{m+1-s}\delta_s^{(k)},$$
	which leads to \eqref{BB} for $n=m+1$.
	
	Conversely, assume \eqref{BB} is satisfied for $n=0,1,\dots,$ and \eqref{deltas_aes} holds for $n=0,1,\dots,m$. Then, taking $n=m+1$ in \eqref{BB},
	\begin{align*}
		\delta_{m+1}^{(k)}  = & (m+1)!a_{m+1,m-k+1}-\sum_{i=k}^m\binom{m+1}{i}(-1)^{m+1-i}\delta_i^{(k)}\\
		= & (m+1)!a_{m+1,m-k+1}-\sum_{i=k}^m\binom{m+1}{i}(-1)^{m+1-i}\left[\sum_{s=k}^i\binom{i}{s}s!a_{s,s-k}\right]\\
		= & (m+1)!a_{m+1,m-k+1}-\sum_{s=k}^m\left[\sum_{i=s}^m\binom{m+1}{i}\binom{i}{s}(-1)^{m+1-i}\right]s!a_{s,s-k}\\
		= & (m+1)!a_{m+1,m-k+1}+\sum_{s=k}^m(-1)^{m-s}\left[\sum_{i=s}^m\binom{m+1}{i}\binom{i}{s}(-1)^{i-s}\right]s!a_{s,s-k}
	\end{align*}
	Using \eqref{eq-aux-2}
	$$\delta_{m+1}^{(k)}=(m+1)!a_{m+1,m-k+1}+\sum_{s=k}^m\binom{m+1}{s}s!a_{s,s-k},$$
	which gives \eqref{deltas_aes}.
	\hfill $\square$
	
	\section{Direct problem}\label{directo}
	
	For this section, we assume that the differential operator \eqref{1} of order $N$ is given; we also assume that there exist both the sequence of eigenvalues 
	$\{\lambda_n\}$
	and the corresponding sequence of eigenpolynomials
	$
	\{P_n\} 
	$
	satisfying \eqref{11} . 
	
	Here and in the sequel, we assume $a_0\equiv 0$ and $\lambda_0=0$ because, otherwise, we would substitute $a_0$ by $a_0-\lambda_0$. Also, we define 
	\begin{equation}
		a_n(x)=0,\quad n>N.
		\label{aaa}
	\end{equation}
	Then, for each 
	$n\in \mathbb{N}$
	we write the polynomials $a_n(x)$ as
	\begin{equation}
		a_n(x)=\sum_{i=0}^n a_{n,i}x^i,\quad a_{n,i}\in \mathbb{C},\quad i=0,1,\ldots, n,
		\label{1111}
	\end{equation}
	and the polynomial $P_n(x)$ as
	\begin{equation}
		P_n(x)=\sum_{i=0}^n b_{n,i}x^i,\quad b_{n,n}=1,\quad b_{n,i}\in \mathbb{C},\quad i=0,1,\ldots, n-1.
		\label{111}
	\end{equation}
	
	Last, we define the sequence $\{\delta_n^{(k)}\}$ as in \eqref{deltas_aes}, understanding 
	$\delta_n^{(k)}=0$
	when 
	$k>n.$
	We remark that, from \eqref{deltas_aes} and \eqref{aaa}, we have 
	$\delta_n^{(k)}=0$
	also when 
	$N+1\leq k\leq n$.
	
	The following result, proved in \cite{AngBar}, provides the relationship between the sequence $\{\delta_n^{(k)}\}$ and the coefficients of the eigenpolynomials $P_n(x)$.
	
	\begin{teo}\cite[Theorem 1]{AngBar}
		\label{lema1} 
		Let 
		$
		\{P_n(x)\}
		$
		and  
		$
		\{\lambda_n\}
		$
		be the sequences of eigenpolynomials and eigenvalues of $L$, respectively, and let $\{\delta_n^{(k)}\}$ be the sequence defined in \eqref{deltas_aes}. Then, for each $n\in \mathbb{N}$, \eqref{11} is equivalent to
		\begin{equation}
			\sum_{k=0}^N\delta_{m+k}^{(k)} b_{n,m+k}=\lambda_nb_{n,m},\quad m=0,1,\ldots, n,
			\label{7}
		\end{equation}
		where $b_{n,m+k},\,k=0,\ldots, ,N,$ are the coefficients of the polynomials $\{P_n\}$ as described in \eqref{111}.
	\end{teo}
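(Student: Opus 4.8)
The plan is to substitute the polynomial expansions \eqref{1111} and \eqref{111} directly into the eigenvalue equation \eqref{11} and then read off the identity \eqref{7} by comparing coefficients of equal powers of $x$, with the sequence $\{\delta_n^{(k)}\}$ emerging naturally from the combinatorics of differentiating monomials. Concretely, I would first compute $\partial_x^i P_n(x)$ term by term: differentiating $b_{n,j}x^j$ exactly $i$ times gives $b_{n,j}\,\frac{j!}{(j-i)!}\,x^{j-i}$. Multiplying by $a_i(x)=\sum_t a_{i,t}x^t$ and summing over $i$ from $1$ to $N$ (equivalently over all $i$, thanks to the convention \eqref{aaa}), the coefficient of $x^m$ on the left side of \eqref{11} is a double sum over $i$ and over the "origin" power $j$ of $P_n$, subject to $t + (j-i) = m$, i.e. $t = m - j + i$. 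It is then a matter of bookkeeping to reorganize this sum so that the inner summation over $i$, for a fixed shift $k := j - m \ge 0$, produces exactly $\sum_{i=k}^{\cdot}\binom{j}{i}i!\,a_{i,i-k}$, which by \eqref{deltas_aes} equals $\delta_j^{(k)} = \delta_{m+k}^{(k)}$. Matching this against the coefficient of $x^m$ on the right side, namely $\lambda_n b_{n,m}$, yields \eqref{7}.

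The key technical step is the index reorganization: one must verify that for a fixed target power $x^m$ and a fixed source coefficient $b_{n,j}$ of $P_n$ with $j = m+k$, the contribution collected from all derivative orders $i$ is precisely $\binom{j}{i}i!\,a_{i,i-k}$ summed over the admissible range of $i$. The binomial coefficient $\binom{j}{i}i! = \frac{j!}{(j-i)!}$ is exactly the falling factorial that appears when differentiating $x^j$ $i$ times, so this identification is essentially forced; the constraint $t = m - j + i = i - k \ge 0$ together with $t \le i$ (from $\deg a_i \le i$) and $i \le N$ pins down the summation limits, and the upper limit can be taken as $j$ since $\binom{j}{i} = 0$ for $i > j$ and since $a_{i,\cdot}$ vanishes for $i > N$ by \eqref{aaa}. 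The degree bounds $\deg(a_i)\le i$ and $\deg(a_i\partial_x^i P_n)\le n$ guarantee that no power $x^m$ with $m > n$ appears, so the system \eqref{7} is consistent and the index $m$ ranges over $0,1,\ldots,n$.

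The main obstacle is purely organizational rather than conceptual: keeping the three summation indices (the derivative order $i$, the monomial index in $a_i$, and the monomial index in $P_n$) consistently aligned through the change of variables $k = j - m$, and making sure the conventions $\binom{r}{s}=0$ for $r<s$ or $s<0$, together with $a_{i,t}=0$ for $t<0$ or $t>i$ and $a_n\equiv 0$ for $n>N$, are invoked exactly where needed so that all sums can be written with clean uniform limits. Since the statement asserts an equivalence, I would also note that the argument is reversible: because $b_{n,n}=1$ and $P_n$ has degree $n$, the coefficient identities \eqref{7} for $m=0,1,\ldots,n$ are equivalent to the equality of the two polynomials in \eqref{11}, so deriving \eqref{7} from \eqref{11} and conversely are the same computation read in opposite directions. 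This completes the proof, and in fact it is the proof already given in \cite{AngBar}, so here it suffices to indicate the mechanism and refer to that reference for the detailed verification.
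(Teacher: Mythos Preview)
Your proposal is correct, and the paper itself does not supply a proof of this theorem: it is quoted verbatim from \cite{AngBar} and used as a tool, so there is no ``paper's own proof'' to compare against here. Your direct-substitution argument---expanding $a_i(x)\partial_x^i P_n(x)$, collecting the coefficient of $x^m$, and recognizing the inner sum over the derivative order as $\delta_{m+k}^{(k)}$ via \eqref{deltas_aes}---is exactly the natural route and is, as you note, presumably the one carried out in \cite{AngBar}; your handling of the index bookkeeping and the equivalence direction is accurate.
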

	In the statement of Theorem \ref{lema1} and in the rest of the paper, we understand
	$
	b_{n,s}=0
	$
	when 
	$s>n$.
	
	As a consequence of the above result, the eigenvalues of $L$ are determined as 
	\begin{equation}
		\lambda_n=\delta_n^{(0)},\quad n\in \mathbb{N}.
		\label{****}
	\end{equation}
	
	We assume that all the eigenvalues $\lambda_n,\,n=0,1,\ldots,  $ are different from zero. 
	From this, \eqref{deltas_aes} and \eqref{****} we see that 
	$
	a_{n,n}\neq 0$
	in \eqref{1111}
	for some 
	$n\in\{1,2,\ldots, N\}$
	.
	
	Furthermore, we assume that the eigenvalues 
	$\lambda_n,\,n=0,1,\ldots,  $ are also different from each other, which again only depends on the leading coefficients of
	$
	a_n(x)
	$
	in \eqref{1111} (see also  \eqref{deltas_aes}).

	In \cite[Theorem 2]{AngBar}, we provided an expression of the coefficients of the eigenpolynomials $P_n(x)$ in terms of the elements of the sequence $\{\delta_n^{(k)}\}$. Now, we provide a different expression that we find easier to use since it consists of the determinant of a Hessenberg matrix.

	\begin{teo}
		\label{teorema_coeficientes}
		Under the above conditions and using the notation from \eqref{111}, for each $n\in\mathbb{N}$ 
		and
		$1\leq i\leq n$
		we have 
		\begin{equation}
			\label{coeficientes}
			b_{n,n-i}=\left|\begin{array}{ccccccc}
				\displaystyle\frac{\delta_n^{(1)}}{\lambda_n-\lambda_{n-1}} & \displaystyle\frac{\delta_n^{(2)}}{\lambda_n-\lambda_{n-2}} & \cdots & \displaystyle\frac{\delta_n^{(N)}}{\lambda_n-\lambda_{n-N}} & 0 & \textcolor{white}{\displaystyle\frac{\delta_n^{(1)}}{\lambda_n-\lambda_{n-1}}} &\\[5mm]
				-1 & \displaystyle\frac{\delta_{n-1}^{(1)}}{\lambda_n-\lambda_{n-2}} & \cdots & \displaystyle\frac{\delta_{n-1}^{(N-1)}}{\lambda_n-\lambda_{n-N}} & \displaystyle\frac{\delta_{n-1}^{(N)}}{\lambda_n-\lambda_{n-N+1}} & \ddots & \\[5mm]
				0 & -1 & \ddots & & \ddots & \ddots & 0\\[5mm]
				& \ddots & \ddots & \ddots & & \ddots & \displaystyle\frac{\delta_{n-i+N}^{(N)}}{\lambda_n-\lambda_{n-i}}\\[5mm]
				& & \ddots & \ddots & \ddots & & \displaystyle\frac{\delta_{n-i+N-1}^{(N-1)}}{\lambda_n-\lambda_{n-i}}\\[5mm]
				& & & \ddots & \ddots & \ddots & \vdots\\[5mm]
				& & & & 0 & -1 & \displaystyle\frac{\delta_{n-i+1}^{(1)}}{\lambda_n-\lambda_{n-i}}
			\end{array}\right|
		\end{equation}
		where the determinant is correspondingly truncated when $n\leq N$.
	\end{teo}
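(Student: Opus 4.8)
\noindent\emph{Proof proposal.} The plan is to turn \eqref{coeficientes} into a statement about a first‑order (in disguise) linear recurrence for the coefficients $b_{n,m}$, and then to recognize the displayed Hessenberg determinant as the closed form of the solution of that recurrence.

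First I would rewrite \eqref{7} of Theorem~\ref{lema1} by splitting off the $k=0$ term. Since $\delta_m^{(0)}=\lambda_m$ by \eqref{****}, equation \eqref{7} becomes
\[
\sum_{k=1}^{N}\delta_{m+k}^{(k)}b_{n,m+k}=(\lambda_n-\lambda_m)\,b_{n,m},\qquad m=0,1,\dots,n .
\]
Because the eigenvalues are assumed pairwise distinct, for $m<n$ this can be solved for $b_{n,m}$, and since $b_{n,s}=0$ for $s>n$ the sum only runs up to $k=\min\{N,n-m\}$. Setting $\beta_i:=b_{n,n-i}$ (so that $\beta_0=1$) and $A_{i,k}:=\delta_{n-i+k}^{(k)}/(\lambda_n-\lambda_{n-i})$, this reads
\[
\beta_i=\sum_{k=1}^{\min\{N,i\}}A_{i,k}\,\beta_{i-k},\qquad i=1,\dots,n,
\]
with the convention $A_{i,k}=0$ for $k>N$ (valid because $\delta^{(k)}\equiv 0$ for $k\ge N+1$). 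Together with $\beta_0=1$ this recurrence determines all the $b_{n,m}$ uniquely, consistently with $P_n$ being monic.

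It then suffices to check that, if $D_i$ denotes the $i\times i$ determinant on the right‑hand side of \eqref{coeficientes} (with $D_0:=1$), then $D_i$ satisfies the same recurrence. I would prove this by induction on $i$, expanding $D_i$ along its last column. The $(r,i)$ entry of the matrix is $A_{i,\,i-r+1}$, and its cofactor is, up to the sign $(-1)^{r+i}$, the determinant of the minor obtained by deleting row $r$ and column $i$. Since the matrix is upper Hessenberg (zero below the first subdiagonal), that minor is block upper triangular: its top‑left $(r-1)\times(r-1)$ block is exactly the leading principal submatrix, with determinant $D_{r-1}$, while its bottom‑right $(i-r)\times(i-r)$ block is lower triangular with $-1$'s on the diagonal, contributing $(-1)^{i-r}$. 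All signs cancel and one obtains $D_i=\sum_{r=1}^{i}A_{i,\,i-r+1}D_{r-1}=\sum_{k=1}^{i}A_{i,k}D_{i-k}$; by the inductive hypothesis $D_{i-k}=\beta_{i-k}$, hence $D_i=\beta_i=b_{n,n-i}$. For $n\le N$ the same argument applies verbatim, the entries with superscript exceeding $N$ being zero, which is precisely the truncation of the matrix indicated in the statement.

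The combinatorial lemmas of Section~\ref{auxiliares} are not needed here; the only genuinely delicate point is the bookkeeping — verifying that the generic entry of the displayed matrix really equals $A_{c,\,c-r+1}=\delta_{n-r+1}^{(c-r+1)}/(\lambda_n-\lambda_{n-c})$ for $r\le c$, matching the subscripts, superscripts and $\lambda$‑differences in each of the rows shown, and keeping track of which positions fall outside the range $1\le k\le N$ and therefore vanish. Once the entry pattern is pinned down, the cofactor expansion and the induction are routine.
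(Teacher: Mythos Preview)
Your proof is correct and follows essentially the same route as the paper: both derive the recurrence $b_{n,n-i}=\sum_{k\ge 1}\frac{\delta_{n-i+k}^{(k)}}{\lambda_n-\lambda_{n-i}}\,b_{n,n-i+k}$ from \eqref{7} and then identify it with the last-column expansion of the Hessenberg determinant by induction on $i$ (the paper records this expansion as formula \eqref{LemaHes}). One cosmetic slip: the bottom-right $(i-r)\times(i-r)$ block in your cofactor argument is \emph{upper} triangular, not lower, but its determinant is still $(-1)^{i-r}$ and the rest goes through unchanged.
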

	
	\begin{proof}
		Let us prove the theorem by induction on $i$.
		
		From \eqref{7}, we obtain
		\begin{equation}
			\label{eq-5}
			b_{n,n-i}=\displaystyle\frac{\displaystyle\sum_{k=1}^N\delta_{n-i+k}^{(k)}b_{n,n-i+k}}{\lambda_n-\lambda_{n-i}},\quad i=1,2,\ldots,n.
		\end{equation}
		If $i=1$, \eqref{eq-5} leads to
		$$b_{n,n-1}=\frac{\delta_n^{(1)}}{\lambda_n-\lambda_{n-1}},$$
		then \eqref{coeficientes} holds for $i=1$.
		
		Now, assume \eqref{coeficientes} holds up to $i-1$ and let us analyze the case $i$.
		
		\noindent We must notice that $b_{n,n-i+k}=0$ if $k> i$. Then, if $i\leq N$, the upper bound of the summation in \eqref{eq-5} can be restricted to $i$. Moreover, we recall that 
		$\delta_{n-i+k}^{(k)}=0$ for $k\geq N+1$. Therefore, either with $i\leq N$ or with $i>N$, we can write \eqref{eq-5} as
		\begin{eqnarray}
			\label{eq-7}
			b_{n,n-i}&=&\frac{\delta_{n-i+1}^{(1)}}{\lambda_n-\lambda_{n-i}}b_{n,n-i+1}+\sum_{k=2}^i\frac{\delta_{n-i+k}^{(k)}}{\lambda_n-\lambda_{n-i}}b_{n,n-i+k}\nonumber\\
			&=&\frac{\delta_{n-i+1}^{(1)}}{\lambda_n-\lambda_{n-i}}b_{n,n-i+1}+\sum_{j=1}^{i-1}\frac{\delta_{n-j+1}^{(i-j+1)}}{\lambda_n-\lambda_{n-i}}b_{n,n-j+1}.
		\end{eqnarray}
		
		On the other hand, it is easy to see that, for any upper Hessenberg matrix 
		$H$ 
		of order 
		$m$ 
		with entries
		$h_{ij}, i,j=1,2,\ldots,m,$
		and leading principal submatrices $H_s$ of order 
		$s=1,2,\ldots,m-1,$
		we have
		\begin{equation}
			\det(H)=h_{m,m}\det(H_{m-1})+\sum_{j=1}^{m-1}(-1)^{m-j}h_{j,m}\prod_{\ell=j}^{m-1}h_{\ell+1,\ell}\det(H_{j-1}).
			\label{LemaHes}
		\end{equation}
		Thanks to the induction hypothesis, we know $b_{n,n-i+1}$ and $b_{n,n+1-j}, 1\leq j\leq i-1$ are the determinants defined in \eqref{coeficientes}, that is, they are determinants of matrices of size $i-1$ and $j-1$, respectively. Then, comparing \eqref{eq-7} with \eqref{LemaHes} we obtain that $b_{n,n-i}$ is the determinant of an upper Hessenberg matrix $H$ of order $i$ with
		$$h_{i,i}=\frac{\delta_{n-i+1}^{(1)}}{\lambda_n-\lambda_{n-i}}, \quad \det(H_{i-1})=b_{n,n-i+1}, \quad h_{j,i}=\frac{\delta_{n+1-j}^{(i+1-j)}}{\lambda_n-\lambda_{n-i}}, \quad \det(H_{j-1})=b_{n,n-j+1}.$$
		It is easy to check $H$ is the matrix whose determinant appears in \eqref{coeficientes}, which concludes our proof.
			
	\end{proof}
	
	In the following theorem, the expression of $\delta_n^{(k)},\, n>N$, in terms of $\delta_1^{(k)},\dots,\delta_N^{(k)}$ is given.
	
	\begin{teo}
		\label{th-main}
		Let $L$ be the operator defined in \eqref{1} and let $\{\delta_n^{(k)}\}$ be the sequence defined in \eqref{deltas_aes}. Then,
		\begin{equation}
			\label{eq-2}
			\begin{cases}
				\displaystyle\delta_n^{(k)}=\sum_{i=k}^{N}(-1)^{N-i}\binom{n}{i}\binom{n-i-1}{N-i}\delta_i^{(k)},\quad n\geq N+1.\\
				\delta_n^{(k)}=0,\quad n\geq k>N.
			\end{cases}
		\end{equation}
	\end{teo}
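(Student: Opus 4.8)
The plan is to reduce the statement to the equivalence in Theorem~\ref{Lema2} together with the combinatorial identity \eqref{2_lema1}, with no serious analytic content involved.

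First I would dispatch the easy line $\delta_n^{(k)}=0$ for $n\ge k>N$: writing \eqref{deltas_aes} as $\delta_n^{(k)}=\sum_{i=k}^{n}\binom{n}{i}i!\,a_{i,i-k}$, every index $i$ occurring there satisfies $i\ge k>N$, so $a_i\equiv0$ by \eqref{aaa}, hence $a_{i,i-k}=0$ and $\delta_n^{(k)}=0$. (This is exactly the remark already recorded before Theorem~\ref{lema1}, so it can simply be cited.)

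For the main line, fix $k$ with $0\le k\le N$ and take $n\ge N+1$. Since $a_{i,i-k}=0$ whenever $i>N$, the upper limit in \eqref{deltas_aes} may be lowered to $N$, giving $\delta_n^{(k)}=\sum_{i=k}^{N}\binom{n}{i}i!\,a_{i,i-k}$. Now, because $\{\delta_n^{(k)}\}$ is defined through \eqref{deltas_aes}, Theorem~\ref{Lema2} supplies \eqref{BB}; applying it with $n$ replaced by $i$ (legitimate for each $i\le N$) yields $i!\,a_{i,i-k}=\sum_{j=k}^{i}\binom{i}{j}(-1)^{i-j}\delta_j^{(k)}$. Substituting this and interchanging the two sums over the triangular region $k\le j\le i\le N$ gives
\[
\delta_n^{(k)}=\sum_{j=k}^{N}\left(\sum_{i=j}^{N}\binom{n}{i}\binom{i}{j}(-1)^{i-j}\right)\delta_j^{(k)}.
\]
It then remains to evaluate the bracketed coefficient. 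Using the elementary identity $\binom{n}{i}\binom{i}{j}=\binom{n}{j}\binom{n-j}{i-j}$ and reindexing with $\ell=i-j$, the inner sum becomes $\binom{n}{j}\sum_{\ell=0}^{N-j}\binom{n-j}{\ell}(-1)^{\ell}$. Since $n\ge N+1$ we may write $n-j=(N-j)+(n-N-1)+1$ with $n-N-1\ge0$, so \eqref{2_lema1} applies (with its ``$m$'' $=N-j$ and its ``$k$'' $=n-N-1$) and gives $\sum_{\ell=0}^{N-j}\binom{n-j}{\ell}(-1)^{\ell}=(-1)^{N-j}\binom{n-j-1}{n-N-1}=(-1)^{N-j}\binom{n-j-1}{N-j}$. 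Plugging this back and renaming $j$ as $i$ produces precisely the first case of \eqref{eq-2}.

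There is no deep obstacle here; the only point requiring care is the bookkeeping. One must check that the hypothesis $n\ge N+1$ is exactly what makes \eqref{2_lema1} applicable in the form used—for $n=N$ the bracketed sum collapses differently and the stated formula would break down, since $\binom{n-j-1}{N-j}$ would then involve entries of the form $\binom{-1}{0}$—and one must keep the vanishing conventions for the binomial coefficients and for the truncated quantities $\delta_i^{(k)}$, $a_{i,j}$ consistent throughout the manipulation of the summation ranges.
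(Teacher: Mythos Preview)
Your argument is correct and considerably shorter than the paper's. The paper proves \eqref{eq-2} by induction on $n$: at the base step $n=N+1$ it reads \eqref{BB} for $a_{N+1}=0$; at the inductive step it writes $\delta_{n+1}^{(k)}$ from \eqref{BB} for $a_{n+1}=0$, feeds in the inductive hypothesis for each $\delta_i^{(k)}$ with $N<i\le n$, and then has to verify that the resulting coefficient of each $\delta_\ell^{(k)}$ matches \eqref{eq-2}, which it does via a partial-fraction decomposition of $1/\prod_{t=1}^{n-N+1}(t-s)$. Your route bypasses all of this: you plug \eqref{BB} directly into the truncated form of \eqref{deltas_aes}, interchange the sums, use $\binom{n}{i}\binom{i}{j}=\binom{n}{j}\binom{n-j}{i-j}$, and finish with a single application of \eqref{2_lema1}. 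What the paper's inductive argument buys is that it never needs the trinomial-revision identity, relying only on its own Lemmas; what your approach buys is brevity and transparency, at the cost of one extra (completely standard) binomial identity. Both proofs use \eqref{BB} as the essential input, and your care about the hypothesis $n\ge N+1$ being exactly what makes \eqref{2_lema1} applicable is the right observation.
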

	
	\begin{proof} 
		
		Let us prove \eqref{eq-2} by induction on $n$. The base case is $n=N+1$.
		Since the sequence \eqref{deltas_aes} is equivalent to \eqref{BB} (see Theorem \ref{lema1}) and \eqref{aaa}, we obtain $a_{N+1}\equiv 0$ and, for $k\leq N+1$,
		$$
		0=(N+1)!a_{N+1,N+1-k}=\sum_{i=k}^{N+1}\binom{N+1}{i}(-1)^{N+1-i}\delta_i^{(k)}.
		$$
		Therefore,
		$$
		\delta_{N+1}^{(k)}=\sum_{i=k}^N\binom{N+1}{i}(-1)^{N-i}\delta_i^{(k)}.
		$$
		Notice that this is exactly \eqref{eq-2} for $n=N+1$, so the base case is proved.
		
		For the inductive step, assume the result is true for $n\geq N+1$. For $n+1$, \eqref{BB} is 
		$$
		0=(n+1)!a_{n+1,n+1-k}=\sum_{i=k}^{n+1}\binom{n+1}{i}(-1)^{n+1-i}\delta_i^{(k)},
		$$
		which is equivalent to
		$$\delta_{n+1}^{(k)}=\sum_{i=k}^{n}\binom{n+1}{i}(-1)^{n-i}\delta_i^{(k)}.
		$$
		
		Applying the induction hypothesis,
		\begin{equation}
			\label{eq-ind-1}
			\delta_{n+1}^{(k)}=\sum_{i=k}^{N}\binom{n+1}{i}(-1)^{n-i}\delta_i^{(k)}+\sum_{i=N+1}^n\binom{n+1}{i}(-1)^{n-i}\sum_{\ell=k}^{N}(-1)^{N-\ell}\binom{i}{\ell}\binom{i-\ell-1}{N-\ell}\delta_\ell^{(k)}.
		\end{equation}
		In \eqref{eq-ind-1}, the coefficient that multiplies each $\delta_\ell^{(k)}$ for 
		$\ell\geq k$ is
		\begin{equation}
			\label{eq-aux1}
			\binom{n+1}{\ell}(-1)^{n-\ell}+\sum_{i=N+1}^n\binom{n+1}{i}(-1)^{N-\ell+n-i}\binom{i}{\ell}\binom{i-\ell-1}{N-\ell}.
		\end{equation}
		In order to prove our result, we must show the above coefficient is equal to the coefficient that multiplies each $\delta_\ell^{(k)}$ in the expression of $\delta_{n+1}^{(k)}$ given by \eqref{eq-2}. This coefficient is 
		\begin{equation}
			\label{eq-aux2}
			(-1)^{N-\ell}\binom{n+1}{\ell}\binom{n-\ell}{N-\ell}=(-1)^{N-\ell}\frac{(n+1)!}{\ell!(N-\ell)!(n-N)!(n+1-\ell)}.
		\end{equation}
		
		To prove \eqref{eq-aux1} coincides with the right hand side of \eqref{eq-aux2} let us expand \eqref{eq-aux1}, which is
		\begin{align}
			\label{eq-x}
			&\frac{(n+1)!}{(n+1-\ell)!\ell!}(-1)^{n-\ell}+\sum_{i=N+1}^n\frac{(n+1)!}{(n+1-i)!i!}\frac{(-1)^{n-i}(-1)^{N-\ell}}{\ell!(N-\ell)!}\frac{i!}{(i-N-1)!(i-\ell)}\nonumber\\ 
			=&(-1)^{N-\ell}\frac{(n+1)!}{\ell!(N-\ell)!}\left(\frac{(-1)^{n-N}}{(n+1-\ell)(n-\ell)\cdots(N+1-\ell)}+\sum_{i=N+1}^n\frac{(-1)^{n-i}}{(n+1-i)!(i-N-1)!(i-\ell)}\right).
		\end{align}
		Taking $s=\ell-N$ and $j=i-N$,  in \eqref{eq-x} we can write  
		\begin{equation}
			\label{aux}
			(-1)^{N-\ell}\frac{(n+1)!}{\ell!(N-\ell)!}\left(\frac{(-1)^{n-N}}{(1-s)(2-s)\cdots(n-N+1-s)}+\sum_{j=1}^{n-N}\frac{(-1)^{n-N-j}}{(n-N+1-j)!(j-1)!(j-s)}\right).
		\end{equation}
		
		Now we will prove the following partial fraction decomposition for 
		$
		n\in \mathbb{N},\, n\geq N,\, s\in\mathbb{C},\, s\neq 1,2,\ldots,  n-N+1$
		,
		\begin{equation}
			\label{31tilde}
			\frac{1}{(1-s)(2-s)\cdots(n-N+1-s)}=\sum_{i=1}^{n-N+1}\frac{(-1)^{i-1}}{(n-N+1-i)!(i-1)!(i-s)}.
		\end{equation}
		In fact, we would like to obtain the coefficients for the partial fraction decomposition
		$$\frac{1}{(1-s)(2-s)\cdots(n-N+1-s)}=\frac{A_1}{1-s}+\frac{A_2}{2-s}+\cdots\frac{A_{n-N+1}}{n-N+1-s}.$$
		The right hand side in the above expression can be written with a common denominator as
		$$\sum_{i=1}^{n-N+1}\frac{A_i}{i-s}=
		\sum_{i=1}^{n-N+1} \frac{A_i\prod_{k\neq i} (k-s)}{\prod_{k=1}^{n-N+1} (k-s)}.$$
		Consequently,
		$$
		1=\sum_{i=1}^{n-N+1} A_i\prod_{k\neq i} (k-s)
		$$
		and taking $s=i, 1\leq i\leq n-N+1$, we arrive to \eqref{31tilde}.
		
		Using the above partial fraction decomposition in \eqref{aux}, we obtain
		\begin{align*}
			&(-1)^{n-\ell}\frac{(n+1)!}{\ell!(N-\ell)!}\left(\sum_{i=1}^{n-N+1}\frac{(-1)^{i-1}}{(n-N+1-i)!(i-1)!(i-s)}+\sum_{j=1}^{n-N}\frac{(-1)^j}{(n-N+1-j)!(j-1)!(j-s)}\right)\\
			&=(-1)^{n-\ell}\frac{(n+1)!}{\ell!(N-\ell)!}\frac{(-1)^{n-N}}{(n-N)!(n-N+1-s)}=(-1)^{N-\ell}\frac{(n+1)!}{\ell!(N-\ell)!}\frac{1}{(n-N)!(n-N+1-s)}.
		\end{align*}
		Undoing the change $s=\ell-N$, this is
		$$(-1)^{N-\ell}\frac{(n+1)!}{\ell!(N-\ell)!(n-N)!(n+1-\ell)},$$
		which is exactly \eqref{eq-aux2}.	
		
		Finally, combining \eqref{aaa} and \eqref{deltas_aes} we obtain $\delta_n^{(k)}=0,\ n\geq k>N$, as we remarked after \eqref{111}.
		
	\end{proof}
	
	Combining theorems \ref{teorema_coeficientes} and \ref{th-main} we obtain an expression of each coefficient $b_{n,n-i}$ that depends, at most, on $\lambda_1, \dots, \lambda_N, \delta_1^{(1)},\dots,\delta_N^{(1)},\delta_2^{(2)},\dots,\delta_N^{(2)},$ $\delta_3^{(3)},\dots,\delta_N^{(3)},\dots,\delta_N^{(N)}$. In particular, if $n-i\geq N$, the expression for the coefficient is very simple, as shown in the following result.
	
	\begin{coro}
		Let $n-i\geq N$. Then, we have
		\begin{equation}
			\label{eq-b}
			b_{n,n-i}=\frac{n(n-1)\cdots(n-i+1)}{\prod_{\ell=1}^{i}\left(\sum_{m=1}^N(-1)^{N-m}\binom{N}{m}\left(\prod_{\substack{j=0\\ j\neq m}}^{N}(n-j)-\prod_{\substack{j=0\\ j\neq m}}^{N}(n-\ell-j)\right)\lambda_m\right)}\det(M),
		\end{equation}
		where $M$ is an upper Hessenberg $i\times i$ matrix that has
		\begin{itemize}
			\item 	$\displaystyle\frac{1}{n-\ell}\sum_{i=1}^N(-1)^{N-i}\binom{N}{i}\left(\prod_{\substack{j=0\\ j\neq i}}^{N}(n-\ell-j)-\prod_{\substack{j=0\\ j\neq i}}^{N}(n-j)\right)\lambda_i$ as the $\ell$-th entry of the subdiagonal.
			
			\item $\displaystyle\sum_{i=1}^N(-1)^{N-i}\binom{N}{i}\prod_{j=1,j\neq i}^{N}(n-\ell+1-j)\delta_i^{(1)}$ as the $\ell$-th entry of the main diagonal.
			%\item $\displaystyle\sum_{i=2}^N(-1)^{N-i}\binom{N}{i}\prod_{j=1,j\neq i}^{N}(n-\ell+1-j)\delta_i^{(2)}$ as the $\ell$-th entry of the 1-diagonal.
			%\item $\cdots$
			\item $\displaystyle\sum_{i=k}^N(-1)^{N-i}\binom{N}{i}\prod_{j=1,j\neq i}^{N}(n-\ell+1-j)\delta_i^{(k)}$ as the $\ell$-th entry of the $(k-1)$-superdiagonal, $\,\,k=2,\ldots, N-1$.
			%\item $\cdots$
			\item $\displaystyle\prod_{j=1}^{N-1}(n-\ell+1-j)\delta_N^{(N)}$ as the $\ell$-th entry of the $(N-1)$-superdiagonal.
			\item $[0,\cdots,0]$ as the $m$-superdiagonal for $m\geq N$.
		\end{itemize}
	\end{coro}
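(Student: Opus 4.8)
The plan is to exhibit the matrix $M$ of the statement as a two–sided diagonal rescaling of the Hessenberg matrix produced by Theorem~\ref{teorema_coeficientes}, after which \eqref{eq-b} drops out by taking determinants. First I would recall that Theorem~\ref{teorema_coeficientes} (concretely, the relation \eqref{eq-7} and the Hessenberg expansion used in its proof) gives $b_{n,n-i}=\det H$, where $H=(h_{r,c})_{r,c=1}^{i}$ is the $i\times i$ upper Hessenberg matrix with $h_{r,c}=\dfrac{\delta_{n-r+1}^{(c-r+1)}}{\lambda_n-\lambda_{n-c}}$ for $1\le r\le c\le i$, with $h_{r+1,r}=-1$, and with $h_{r,c}=0$ for $r>c+1$. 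Under the hypothesis $n-i\ge N$ every $\delta$–index occurring here satisfies $n-r+1\ge n-i+1\ge N+1$, and every eigenvalue index $n-c$ (for $1\le c\le i$) satisfies $n-c\ge N$; these are precisely the regimes in which Theorem~\ref{th-main} can be applied, the single boundary value $n-c=N$ being handled separately below. Also, all $\lambda_n-\lambda_{n-c}$ and all $n-r+1\ (\ge N+1>0)$ are nonzero, by the standing distinctness assumption on the eigenvalues.

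Next I would record an elementary binomial identity: for $m\ge N+1$ and $0\le\iota\le N$,
$$\binom{m}{\iota}\binom{m-\iota-1}{N-\iota}=\frac{1}{N!}\binom{N}{\iota}\prod_{\substack{j=0\\ j\neq\iota}}^{N}(m-j),$$
which is immediate after writing both sides with factorials and using $\dfrac{m!}{(m-N-1)!}=\prod_{j=0}^{N}(m-j)$. Feeding this into \eqref{eq-2} and using $\lambda_m=\delta_m^{(0)}$ from \eqref{****} together with $\lambda_0=0$, one gets, for $m\ge N+1$,
$$\delta_m^{(k)}=\frac{1}{N!}\sum_{\iota=k}^{N}(-1)^{N-\iota}\binom{N}{\iota}\Big(\prod_{\substack{j=0\\ j\neq\iota}}^{N}(m-j)\Big)\delta_\iota^{(k)},\qquad \lambda_m=\frac{1}{N!}\sum_{\iota=1}^{N}(-1)^{N-\iota}\binom{N}{\iota}\Big(\prod_{\substack{j=0\\ j\neq\iota}}^{N}(m-j)\Big)\lambda_\iota .$$
The $\lambda$–identity moreover holds at $m=N$ as well: each product $\prod_{j=0,\,j\neq\iota}^{N}(N-j)$ contains the factor $N-N=0$ unless $\iota=N$, so the right–hand side collapses to $\lambda_N$. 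Finally, for $\iota\ge1$ we may split off the $j=0$ factor: $\prod_{j=0,\,j\neq\iota}^{N}(m-j)=m\prod_{j=1,\,j\neq\iota}^{N}(m-j)$.

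With these in hand, set $D_1=\mathrm{diag}\big(\tfrac1n,\tfrac1{n-1},\dots,\tfrac1{n-i+1}\big)$ and $D_2=\mathrm{diag}\big(N!(\lambda_n-\lambda_{n-1}),\dots,N!(\lambda_n-\lambda_{n-i})\big)$, both invertible by the remarks above. The heart of the proof is the entrywise identity $M=D_1HD_2$. For $1\le r\le c\le i$ one has $(D_1HD_2)_{r,c}=\frac{1}{n-r+1}\cdot\frac{\delta_{n-r+1}^{(c-r+1)}}{\lambda_n-\lambda_{n-c}}\cdot N!(\lambda_n-\lambda_{n-c})=\frac{N!}{n-r+1}\,\delta_{n-r+1}^{(c-r+1)}$; substituting the $\delta$–identity with $m=n-r+1,\ k=c-r+1$ and pulling the factor $n-r+1$ out of the products reproduces exactly the $r$-th entry of the $(c-r)$-superdiagonal as described in the statement — i.e.\ the main–diagonal formula when $c=r$, the $(k-1)$-superdiagonal formula when $1<c-r+1\le N-1$, the $(N-1)$-superdiagonal formula when $c-r+1=N$, and $0$ when $c-r+1>N$ (there the superscript of $\delta$ exceeds $N$, so $\delta=0$). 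For the subdiagonal, $(D_1HD_2)_{r,r-1}=\frac{1}{n-r+1}\cdot(-1)\cdot N!(\lambda_n-\lambda_{n-r+1})=-\frac{N!(\lambda_n-\lambda_{n-r+1})}{n-r+1}$, and applying the $\lambda$–identity at both $n$ and $n-r+1\ (\ge N)$ rewrites $N!(\lambda_n-\lambda_{n-r+1})$ as the bracketed difference of products appearing in the statement; writing $\ell=r-1$ (so $n-r+1=n-\ell$) this is precisely the prescribed $\ell$-th subdiagonal entry. Entries with $r>c+1$ vanish on both sides, so $M=D_1HD_2$.

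Taking determinants, $\det M=\det D_1\,\det H\,\det D_2=\Big(\prod_{r=1}^{i}\tfrac1{n-r+1}\Big)\,b_{n,n-i}\,\Big(\prod_{c=1}^{i}N!(\lambda_n-\lambda_{n-c})\Big)$, hence $b_{n,n-i}=\dfrac{n(n-1)\cdots(n-i+1)}{\prod_{\ell=1}^{i}N!(\lambda_n-\lambda_{n-\ell})}\det M$; replacing each factor $N!(\lambda_n-\lambda_{n-\ell})$ in the denominator by the bracketed expression coming from the $\lambda$–identity yields exactly \eqref{eq-b}. I expect the only real difficulty to be organizational: lining up the diagonals of $M$ as written in the statement with the rescaled entries of $H$, and keeping separate the two uses of the binomial/product identity (one for the $\delta$'s filling the body of $M$, one for the $\lambda$'s in its subdiagonal and in the denominator). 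The one genuinely delicate point is the boundary case $n-i=N$, where $\lambda_{n-i}=\lambda_N$ lies outside the range of Theorem~\ref{th-main}; this is exactly what the degeneration $\prod_{j=0,\,j\neq\iota}^{N}(N-j)=0$ for $\iota\neq N$ repairs.
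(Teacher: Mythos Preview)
Your proof is correct and follows essentially the same route as the paper's own argument: start from the Hessenberg determinant of Theorem~\ref{teorema_coeficientes}, clear the column denominators $\lambda_n-\lambda_{n-c}$, apply Theorem~\ref{th-main} to every entry (all $\delta$--indices being at least $N+1$ thanks to $n-i\ge N$), and then extract the row factor $n-r+1$. The paper's proof says exactly this in two sentences and leaves the bookkeeping to the reader; you have supplied that bookkeeping explicitly by packaging the two extractions as a two--sided diagonal conjugation $M=D_1HD_2$, by recording the identity $\binom{m}{\iota}\binom{m-\iota-1}{N-\iota}=\tfrac{1}{N!}\binom{N}{\iota}\prod_{j\ne\iota}(m-j)$ that translates the form of \eqref{eq-2} into the product form used in the corollary, and by noting that the boundary value $n-\ell=N$ in the denominator is covered because $\prod_{j\ne\iota}(N-j)=0$ for $\iota\ne N$.
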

	
	\begin{proof}
		From \eqref{coeficientes} in Theorem \ref{teorema_coeficientes}, we have	
		\begin{align*}
			b_{n,n-i}= \frac{1}{(\lambda_n-\lambda_{n-1})\cdots(\lambda_n-\lambda_{n-i})}\left|\begin{array}{ccccccc}
				\delta_n^{(1)} & \delta_n^{(2)} & \cdots & \delta_n^{(N)} & 0 & \textcolor{white}{\displaystyle\frac{\delta_n^{(1)}}{\lambda_n-\lambda_{n-1}}} &\\[5mm]
				\lambda_{n-1}-\lambda_n & \delta_{n-1}^{(1)} & \cdots & \delta_{n-1}^{(N-1)}& \delta_{n-1}^{(N)} & \ddots & \\[5mm]
				0 & \lambda_{n-2}-\lambda_n & \ddots & & \ddots & \ddots & 0\\[5mm]
				& \ddots & \ddots & \ddots & & \ddots & \delta_{n-i+N}^{(N)}\\[5mm]
				& & \ddots & \ddots & \ddots & & \delta_{n-i+N-1}^{(N-1)}\\[5mm]
				& & & \ddots & \ddots & \ddots & \vdots\\[5mm]
				& & & & 0 & \lambda_{n-i+1}-\lambda_n & \delta_{n-i+1}^{(1)}
			\end{array}\right|.
		\end{align*}	
		Notice that the $m$-th row of the determinant contains entries of the form $\delta_{n-m+1}^{(k)}$. Since $n-i\geq N$, we can apply \eqref{eq-2} to each row and extract $n-m+1$ from each row, leading to \eqref{eq-b}.
		
	\end{proof}
	
	\begin{rem}
		Taking $k=0$ in \eqref{eq-2}, we underline that the sequence of eigenvalues $\{\lambda_n\}$ of every differential operator \eqref{1} satisfies
		$$\displaystyle\lambda_n=\sum_{i=1}^{N}(-1)^{N-i}\binom{n}{i}\binom{n-i-1}{N-i}\lambda_i, \quad n\geq N+1.$$
		In particular, if $L$ in \eqref{1} is an operator of order $N=2$, then
		\begin{equation}
			\label{eq-lambda2}
			\lambda_n=-n(n-2)\lambda_1+\frac{n(n-1)}{2}\lambda_2.
		\end{equation}
		That is, all the eigenvalues of the classical orthogonal polynomials are a linear combination of $\lambda_1$ and $\lambda_2$ with the coefficients given by \eqref{eq-lambda2}. Notice that if we recall the general expression of the eigenvalues of the classical orthogonal polynomials, it is trivial to verify \eqref{eq-lambda2}:
		\begin{itemize}
			\item For Jacobi polynomials, $\lambda_n=-n(n+1+\alpha+\beta),\ \alpha,\beta>-1$.
			\item For Laguerre polynomials, $\lambda_n=-n$.
			\item For Hermite polynomials, $\lambda_n=-2n$.
		\end{itemize}
		However, this fact seems to have remained unnoticed in the literature to the best of our knowledge.
	\end{rem}
	
	\section{Example of use of $\{\delta_n^{(k)}\}$ for a particular Bochner differential operator}
	\label{ejemplo}
	
	In this section we study the Bochner differential operator
	\begin{equation}
		\label{Delta}
		L:=\sum_{i=1}^Nc_i x^{i-1}\partial_x^{i}+x\partial_x.
	\end{equation}
	
	The reason to study this operator in particular is Conjecture 1.10 in \cite{Shapiro2}. This conjecture claims that the only two $N$-degree Bochner differential operators such that their eigenpolynomials satisfy an $(N+1)$-term recurrence are \eqref{Delta} and 
	\begin{equation}
		\label{operador_otro}
		L=q'(G)G+x\delta_x,
	\end{equation}
	where, given $\ell$ a divisor of $N$, 
	$$G := \sum_{m=0}^{\ell-1}a_m(x\partial_x)^m\partial_x$$
	and $q(t)$ is any complex polynomial of degree $N/\ell$ without a constant term.
	
	That is, if this conjecture is true, only these two Bochner differential operators lead to solutions of this bispectral problem.
	
	The case $N=3$ is studied and proved to be true in \cite{Shapiro2}. However, the techniques used in that work are extremely complicated and it seems hard to extend them for general values of $N$.  Consequently, we propose to address the problem using the sequences 
	$
	\{
	\delta_n^{(k)}
	\},\,n\in \mathbb{N},
	\,k=0,1,\ldots$,
	given in \eqref{deltas_aes}. In this paper, we will only focus in using the sequence $
	\{
	\delta_n^{(k)}
	\}$ to prove that the eigenpolynomials of \eqref{Delta} satisfy an $(N+1)$-term recurrence relation. We hope to address the remaining part of the conjecture in a future work.
	
	Before proving our result, let us make some preliminary simplifications. The polynomials $a_i(x)$ defining the operator \eqref{Delta} are
	$$
	a_i(x)=\left\{
	\begin{array}{ll}
		x+c_1,&\text{if  }i=1,\\
		c_ix^{i-1},&\text{if  }i=2,\dots,N.
	\end{array}
	\right.
	$$ 
	Hence, using \eqref{deltas_aes}, we can deduce
	\begin{equation}
		\label{deltas}
		\delta_n^{(k)}=
		\left\{
		\begin{array}{clll}
			n, & \text{if }& k=0,\\
			\displaystyle\sum_{s=k}^n \binom n s s!c_s,& \text{if }& k=1,\\
			0 ,& \text{if }& k>1.\\
		\end{array}
		\right.
	\end{equation}
	In particular (see \eqref{****}), the eigenvalues are 
	$\lambda_n=n.$
	
	With the notation of \eqref{111}, from \eqref{deltas} and Theorem \ref{teorema_coeficientes},
	
	$$
	(\lambda_n-\lambda_{n-1})(\lambda_n-\lambda_{n-2})\cdots (\lambda_n-\lambda_{n-i})b_{n,n-i}=
	%i!b_{n,n-i}=
	\left|\begin{array}{ccccccc}
		\displaystyle\delta_n^{(1)}& 0 & \cdots & \cdots & \cdots& \cdots & 0\\
		-1 &\delta_{n-1}^{(1)} & 0& \cdots & \cdots& \cdots & 0\\
		0 & -2 & \ddots & \ddots && & \vdots\\
		\vdots & \ddots & \ddots & \ddots &  \ddots &&\vdots\\
		\vdots & & & \ddots & \ddots & \ddots & \vdots\\
		0& \cdots &\cdots & \cdots &- i+2 & \delta_{n-i+2}^{(1)}& 0\\
		0& \cdots &\cdots & \cdots& 0 &- i+1 & \delta_{n-i+1}^{(1)}
	\end{array}\right|.
	$$
	This is,
	\begin{equation}
		\label{0}
		b_{n,n-i}=\frac{\delta_{n}^{(1)} \delta_{n-1}^{(1)} \ldots \delta_{n-i+1}^{(1)} }{i!}
	\end{equation}
	or, what is the same,
	$$
	P_n(x)=x^n+\sum_{i=1}^{n}\frac{\delta_{n}^{(1)} \delta_{n-1}^{(1)} \ldots \delta_{n-i+1}^{(1)} }{i!}x^{n-i},\quad n\in \mathbb{N}.
	$$
	
	Now we can prove our result. That is, we prove that the above sequence of eigenpolynomials of $L$ follows an $(N+1)$-term recurrence relation.
	
	\begin{teo} Let $L$ be the differential operator introduced in \eqref{Delta}. If 
		$\{P_n\}$
		is the sequence of eigenpolynomials and
		$\{\lambda_n\},\,n\in \mathbb{N},$
		the corresponding sequence of eigenvalues for $L$,
		(that is,
		$LP_n=\lambda_nP_n,\,n\in\mathbb{N},$)
		then the following recurrence relation is satisfied,
		\begin{equation}
			\label{recu}
			\left.
			\begin{array}{rlll}
				\displaystyle\sum_{s=1}^{N-1}\alpha_{n,n-s}P_{n-s}(x)+(\alpha_{n,n}-x)P_n(x)+P_{n+1}(x)=0,& n=0,1,\ldots\\
				P_{-N+1}=P_{-N+2}=\cdots =P_{-1}=0,&P_0= 1,
			\end{array}
			\right\}
		\end{equation}
		with
		\begin{equation}
			\label{*}
			\alpha_{n,n-s}=\frac{\delta^{(1)}_n\delta^{(1)}_{n-1}\ldots \delta^{(1)}_{n-s+1}}{(s+1)!}
			\sum_{j=0}^{s+1}\binom {s+1} j(-1)^j\delta^{(1)}_{n-s+j},\quad s=0,\ldots, N-1,
		\end{equation}
		(understanding 
		$\delta^{(1)}_n\delta^{(1)}_{n-1}\ldots \delta^{(1)}_{n-s+1}=1$
		when $s=0$).
		
	\end{teo}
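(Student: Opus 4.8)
The plan is to verify directly that the polynomials $P_n(x)=x^n+\sum_{i=1}^n b_{n,n-i}x^{n-i}$ with $b_{n,n-i}=\frac{\delta_n^{(1)}\delta_{n-1}^{(1)}\cdots\delta_{n-i+1}^{(1)}}{i!}$ satisfy the recurrence \eqref{recu} with the stated coefficients. Since we already know the eigenvalues are $\lambda_n=n$, a natural first move is to exploit the simple combinatorial shape of $b_{n,n-i}$: writing $c_{n,i}:=\delta_n^{(1)}\delta_{n-1}^{(1)}\cdots\delta_{n-i+1}^{(1)}$ (with $c_{n,0}=1$), we have $b_{n,n-i}=c_{n,i}/i!$, and these products telescope in a convenient way when we shift the index $n$. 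I would set up the ansatz $Q_n(x):=\sum_{s=1}^{N-1}\alpha_{n,n-s}P_{n-s}(x)+(\alpha_{n,n}-x)P_n(x)+P_{n+1}(x)$, expand everything in the monomial basis, and show that the coefficient of $x^{n+1-j}$ vanishes for every $j=0,1,\dots,n+1$. The top coefficient (of $x^{n+1}$) is automatically zero because $P_{n+1}$ is monic and $-xP_n$ contributes $-x^{n+1}$.

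The key computation is the coefficient of a general power $x^{n+1-j}$ for $1\le j$. Collecting contributions: from $P_{n+1}$ we get $b_{n+1,n+1-j}=c_{n+1,j}/j!$; from $-xP_n$ we get $-b_{n,n+1-j}=-c_{n,j-1}/(j-1)!$; from $\alpha_{n,n}P_n$ we get $\alpha_{n,n}\,b_{n,n+1-j}=\alpha_{n,n}\,c_{n,j-1}/(j-1)!$; and from each $\alpha_{n,n-s}P_{n-s}$ with $1\le s\le \min(N-1,j-1)$ we get $\alpha_{n,n-s}\,b_{n-s,n+1-j}=\alpha_{n,n-s}\,c_{n-s,j-1-s}/(j-1-s)!$. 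Here I would use the crucial identity $c_{n-s,j-1-s}=c_{n,j-1}/\big(\delta_n^{(1)}\delta_{n-1}^{(1)}\cdots\delta_{n-s+1}^{(1)}\big)$, valid because the product defining $c_{n-s,j-1-s}$ runs over exactly the factors $\delta_{n-s}^{(1)},\dots,\delta_{n-j+1}^{(1)}$, which is $c_{n,j-1}$ with the first $s$ factors removed. Substituting the proposed value of $\alpha_{n,n-s}$ from \eqref{*}, the product of $\delta$'s in $\alpha_{n,n-s}$ cancels against this denominator, so that term becomes $\frac{c_{n,j-1}}{(s+1)!}\cdot\frac{1}{(j-1-s)!}\sum_{\ell=0}^{s+1}\binom{s+1}{\ell}(-1)^\ell\delta_{n-s+\ell}^{(1)}$. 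Thus every term has a common factor $c_{n,j-1}$, and after dividing it out the claim reduces to a purely combinatorial identity in the numbers $\delta_n^{(1)},\delta_{n-1}^{(1)},\dots$ — namely that
\[
\frac{1}{j!}\,\delta_{n-j+1}^{(1)}\;-\;\frac{1}{(j-1)!}\;+\;\frac{\alpha_{n,n}}{(j-1)!}\;+\;\sum_{s=1}^{\min(N-1,\,j-1)}\frac{1}{(s+1)!\,(j-1-s)!}\sum_{\ell=0}^{s+1}\binom{s+1}{\ell}(-1)^\ell\delta_{n-s+\ell}^{(1)}\;=\;0,
\]
where $\alpha_{n,n}=\sum_{j=0}^{1}\binom{1}{j}(-1)^j\delta_{n+j}^{(1)}=\delta_n^{(1)}-\delta_{n+1}^{(1)}$ from \eqref{*} with $s=0$.

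To finish, I would reorganize this identity as a statement about the finite differences of the sequence $\{\delta_m^{(1)}\}$: the inner sums $\sum_{\ell=0}^{s+1}\binom{s+1}{\ell}(-1)^\ell\delta_{n-s+\ell}^{(1)}$ are (up to sign) the $(s+1)$-st forward differences $(-1)^{s+1}\Delta^{s+1}\delta_{n-s}^{(1)}$. A short computation from \eqref{deltas}, or from Theorem \ref{th-main} specialized to $k=1$, shows that $\delta_m^{(1)}$ is a polynomial in $m$ of degree exactly $N-1$ (it equals $\sum_{i=1}^N(-1)^{N-i}\binom{m}{i}\binom{m-i-1}{N-i}\delta_i^{(1)}$ for $m\ge N$, a degree-$(N-1)$ polynomial, and the recurrence forces the same formula to hold for all $m$); hence $\Delta^{s+1}\delta_m^{(1)}=0$ identically once $s+1\ge N$, i.e. $s\ge N-1$. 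This is exactly why the sum in \eqref{recu} truncates at $s=N-1$, and it is the structural reason the recurrence has $N+1$ terms. With the polynomial structure of $\{\delta_m^{(1)}\}$ in hand, the remaining identity is a finite linear relation among binomial coefficients and iterated differences; I expect it to follow from the Vandermonde-type identities already collected in Section \ref{auxiliares} (in particular \eqref{1_lema1} and Lemma \ref{nuevo_lema2}), after reindexing $m=n-j+1$ and comparing coefficients of each $\delta_i^{(1)}$. The main obstacle is organizing this last combinatorial identity cleanly — keeping track of the two nested sums and the shifting arguments $n-s+\ell$ — rather than any conceptual difficulty; everything else is a direct substitution of the closed forms for $b_{n,n-i}$ and $\alpha_{n,n-s}$.
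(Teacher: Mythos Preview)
Your overall strategy---direct verification of the recurrence by coefficient comparison, interpreting $\alpha_{n,n-s}$ as a multiple of the $(s{+}1)$-st forward difference of $\delta^{(1)}$---is sound and in fact somewhat slicker than the paper's route, which first derives the $\alpha_{n,n-s}$ by solving a triangular system, proves \eqref{*} by induction, and then establishes the truncation separately via the technical Lemma~\ref{lema3}. But two concrete errors need repair. First, $\delta_m^{(1)}$ is a polynomial in $m$ of degree $N$, not $N-1$: from \eqref{deltas} one has $\delta_m^{(1)}=\sum_{s=1}^N m(m-1)\cdots(m-s+1)\,c_s$, whose leading term is $c_N\,m(m-1)\cdots(m-N+1)$ with $c_N\neq 0$ (and the formula you cite from Theorem~\ref{th-main} likewise has degree $i+(N-i)=N$). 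Hence $\Delta^{s+1}\delta^{(1)}\equiv 0$ only for $s\ge N$; your claim ``$s\ge N-1$'' would force $\alpha_{n,n-(N-1)}=0$ and yield an $N$-term recurrence, contradicting the statement. Second, the contribution of $-xP_n$ to the coefficient of $x^{n+1-j}$ is $-b_{n,n-j}=-c_{n,j}/j!$, not $-b_{n,n+1-j}$; after dividing by $c_{n,j-1}$ the first two terms of your displayed identity should read $\delta_{n+1}^{(1)}/j!-\delta_{n-j+1}^{(1)}/j!$ rather than $\delta_{n-j+1}^{(1)}/j!-1/(j-1)!$.

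With these fixes, the identity to be checked (after multiplying by $j!$ and setting $t=s+1$) becomes
\[
\delta_{n+1}^{(1)}-\delta_{n-j+1}^{(1)}+\sum_{t=1}^{\min(N,j)}\binom{j}{t}\sum_{\ell=0}^{t}\binom{t}{\ell}(-1)^{\ell}\delta_{n-t+1+\ell}^{(1)}=0,
\]
and for $j>N$ the degree-$N$ property lets you extend the outer sum to $t=j$ at no cost. Swapping the order of summation and applying \eqref{1_lema1} then collapses the double sum to $-\delta_{n+1}^{(1)}+\delta_{n-j+1}^{(1)}$, which finishes the argument. So your approach does close and bypasses Lemma~\ref{lema3} entirely---but you must actually carry out this last reduction rather than leave it as ``expected to follow'': the lemmas in Section~\ref{auxiliares} are tailored to the paper's own computation, and in fact only \eqref{1_lema1} is needed for yours.
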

	\begin{proof}
		With the above notation, it is immediate to see that \eqref{recu} can be written as
		\begin{equation}\label{recurrencia}
			\left.
			\begin{array}{rlll}
				\displaystyle\sum_{s=0}^{N-1}\alpha_{n,n-s}b_{n-s,i}=b_{n,i-1}-b_{n+1,i}\\
				n=0,1,\ldots,\quad i=0,1,\ldots, n+1,\\
			\end{array}
			\right\}
		\end{equation}
		where, as usual,
		$b_{m,j}=0$
		for
		$j>m$ or
		$m<0$.
		
		For each fixed
		$n\in \{0,1,\ldots\}$
		the set of polynomials 
		$
		\{P_0(x),\,P_1(x),\ldots, P_n(x),\,xP_n(x)\}
		$
		constitutes a base of the space of polynomials of degree up to $n+1$. Hence, there exist 
		$
		\alpha_{n,0},\,\alpha_{n,1},\,\ldots \alpha_{n,n}
		$
		such that 
		\begin{equation}
			\label{otra_recu}
			\displaystyle\sum_{s=1}^{n}\alpha_{n,n-s}P_{n-s}(x)+(\alpha_{n,n}-x)P_n(x)+P_{n+1}(x)=0
		\end{equation}
		and
		\begin{equation}
			\label{otras_alfas}
			\displaystyle\sum_{s=0}^{n}\alpha_{n,n-s}b_{n-s,i}=b_{n,i-1}-b_{n+1,i},\quad i=0,1,\ldots, n+1.
		\end{equation}
		As a consequence, \eqref{recu} and \eqref{recurrencia} are satisfied for 
		$
		n=0,1,\ldots, N-1
		$
		(assuming 
		$\alpha_{n,i}=0$
		for 
		$i<0$). 
		Moreover, for each fixed 
		$n\in \{0,1,\ldots\}$, \eqref{otras_alfas} can be interpreted as a triangular system whose solutions, obtained successively taking
		$i=n,\,n-1,\,\ldots, n-N+1,$
		are
		\begin{equation}
			\label{dos}
			\left.
			\begin{array}{ccl}
				\alpha_{n,n}& = & b_{n,n-1}-b_{n+1,n}\\
				\alpha_{n,n-1}& = & -\alpha_{n,n}b_{n,n-1}+b_{n,n-2}-b_{n+1,n-1}\\
				\alpha_{n,n-2}& = & -\alpha_{n,n-1}b_{n-1,n-2}-\alpha_{n,n}b_{n,n-2}+b_{n,n-3}-b_{n+1,n-2}\\
				\vdots & \vdots & \vdots  \\
				\alpha_{n,n-N+1}& = & -\alpha_{n,n-N+2}b_{n-N+2,n-N+1}-\cdots - \alpha_{n,n}b_{n,n-N+1}+b_{n,n-N}-b_{n+1,n-N+1}.\\
			\end{array}
			\right\}
		\end{equation}
		The idea now is to use \eqref{0} and \eqref{dos} to prove \eqref{*}. 
		
		Firstly, notice that \eqref{dos} implies \eqref{*} holds for 
		$s=0$. 
		Secondly, assuming \eqref{*} is satisfied for
		$s=0,1,\ldots, k$
		with 
		$k< N-1$, from \eqref{dos} we have
		\begin{equation}
			\label{tres}
			\alpha_{n,n-k-1}= -\sum_{s=0}^k \alpha_{n,n-s}b_{n-s,n-k-1}+b_{n,n-k-2}-b_{n+1,n-k-1}
		\end{equation}
		where, for 
		$ s=0,1,\ldots,k,$
		\begin{eqnarray}
			\alpha_{n,n-s}b_{n-s,n-k-1}& = & \left(\frac{\delta^{(1)}_n\delta^{(1)}_{n-1}\ldots \delta^{(1)}_{n-s+1}}{ (s+1)!}\right)
			\left(\frac{\delta^{(1)}_{n-s}\delta^{(1)}_{n-s-1}\ldots \delta^{(1)}_{n-k}}{ (k-s+1)!}\right)
			\sum_{j=0}^{s+1}\binom {s+1} j(-1)^j\delta^{(1)}_{n-s+j}\nonumber\\
			& = &  \frac{\delta^{(1)}_n\delta^{(1)}_{n-1}\ldots \delta^{(1)}_{n-k}}{ (k+2)!}
			\sum_{j=0}^{s+1}\frac{(k+2)!(-1)^j}{(k-s+1)!j!(s-j+1)!}\delta^{(1)}_{n-s+j}.
			\label{cuatro}
		\end{eqnarray}
		On the other hand,
		\begin{equation}
			b_{n,n-k-2}-b_{n+1,n-k-1}=
			\frac{\delta^{(1)}_n\delta^{(1)}_{n-1}\ldots \delta^{(1)}_{n-k-1}-\delta^{(1)}_{n+1}\delta^{(1)}_{n}\ldots \delta^{(1)}_{n-k}    }{ (k+2)!}.
			\label{cinco}
		\end{equation}
		
		Substituting \eqref{cuatro} and \eqref{cinco} in \eqref{tres},
		\begin{eqnarray*}
			\alpha_{n,n-k-1}&=&-\sum_{s=0}^k\left( \frac{\delta^{(1)}_n\delta^{(1)}_{n-1}\ldots \delta^{(1)}_{n-k}}{ (k+2)!}
			\sum_{j=0}^{s+1}\frac{(k+2)!(-1)^j}{(k-s+1)!j!(s-j+1)!}\delta^{(1)}_{n-s+j}\right)\\
			&&+\frac{\delta^{(1)}_n\delta^{(1)}_{n-1}\ldots \delta^{(1)}_{n-k}}{ (k+2)!}\left(\delta^{(1)}_{n-k-1}-\delta^{(1)}_{n+1}\right)\\
			&=& \frac{\delta^{(1)}_n\delta^{(1)}_{n-1}\ldots \delta^{(1)}_{n-k}}{ (k+2)!}\left(\sum_{s=0}^k
			\sum_{j=0}^{s+1}\binom{k+2}{s+1}\binom{s+1}{j}(-1)^{j+1}\delta^{(1)}_{n-s+j}+\delta^{(1)}_{n-k-1}-\delta^{(1)}_{n+1}\right)\\
			&=& \frac{\delta^{(1)}_n\delta^{(1)}_{n-1}\ldots \delta^{(1)}_{n-k}}{ (k+2)!}\left(\sum_{r=1}^{k+2}\binom{k+2}{r}
			\delta^{(1)}_{n-k+r-1}\displaystyle\sum_{\ell=1}^{r}\binom{r}{\ell} (-1)^{r-\ell+1}+\delta^{(1)}_{n-k-1}\right)\\
			&=&\frac{\delta^{(1)}_n\delta^{(1)}_{n-1}\ldots \delta^{(1)}_{n-k}}{ (k+2)!}\left(\sum_{r=1}^{k+2}\binom{k+2}{r}(-1)^r
			\delta^{(1)}_{n-k+r-1}+\delta^{(1)}_{n-k-1}\right)\\
			&=& \frac{\delta^{(1)}_n\delta^{(1)}_{n-1}\ldots \delta^{(1)}_{n-k}}{ (k+2)!}\sum_{r=0}^{k+2}\binom{k+2}{r}(-1)^r
			\delta^{(1)}_{n-k+r-1},
		\end{eqnarray*}
		which is \eqref{*} for
		$s=k+1$. With this, \eqref{*} is verified for 
		$k=0,1,\ldots, N-1.$
		
		The next step is to prove \eqref{recurrencia}, from which \eqref{recu} will be a consequence. Due to \eqref{otra_recu}-\eqref{otras_alfas} and the fact that 
		$\alpha_{n,i}=0$
		for 
		$i<0$, 
		the recurrence relation \eqref{recu} is obviously fulfilled for 
		$n\le N-1$.
		Furthermore, for 
		$n\ge N$ and
		$i=n-N+1,\,n-N+2,\,\ldots, n, $
		the relation \eqref{recurrencia} is obvious from \eqref{dos}.  Then, in the rest of the proof we assume 
		$n\geq N,\, i\in \{0,\,1,\,\ldots, n-N\}.$
		That is,
		$i=n-N-k,\,k=0,1,\ldots,n-N.$

		On the left hand side of \eqref{recurrencia}, using \eqref{*} and \eqref{0}, we have
		\begin{eqnarray}
			\label{6_tilde}
			\sum_{s=0}^{N-1}\alpha_{n,n-s}b_{n-s,i}&=&\sum_{s=0}^{N-1}
			\frac{\delta^{(1)}_n\delta^{(1)}_{n-1}\ldots \delta^{(1)}_{n-s+1}}{(s+1)!}
			\left(\sum_{j=0}^{s+1}\binom {s+1} j(-1)^j\delta^{(1)}_{n-s+j}\right) \frac{\delta_{n-s}^{(1)} \delta_{n-s-1}^{(1)} \ldots \delta_{i+1}^{(1)} }{(n-s-i)!}\nonumber\\
			&=&\frac{\delta_{n}^{(1)} \delta_{n-1}^{(1)} \ldots \delta_{i+1}^{(1)} }{(n-i+1)!}\sum_{s=0}^{N-1}\binom {n-i+1}{s+1}\sum_{j=0}^{s+1}\binom {s+1} j(-1)^j\delta^{(1)}_{n-s+j}\nonumber\\
			&=&\frac{\delta_{n}^{(1)} \delta_{n-1}^{(1)} \ldots \delta_{i+1}^{(1)} }{(n-i+1)!}\left(\sum_{s=0}^{N}\sum_{j=s}^{N}\binom {n-i+1}{j}\binom {j} {j-s}(-1)^{j-s}\delta^{(1)}_{n-s+1}-\delta^{(1)}_{n+1}\right)\nonumber\\
			&=&\frac{\delta_{n}^{(1)} \delta_{n-1}^{(1)} \ldots \delta_{i+1}^{(1)} }{(n-i+1)!}\sum_{s=0}^{N}\sum_{j=s}^{N}\binom {n-i+1}{j}\binom {j} {s}(-1)^{j-s}\delta^{(1)}_{n-s+1}-b_{n+1,i}.
		\end{eqnarray}
		Now, it is easy to see
		\begin{eqnarray*}
			\sum_{j=s}^{N}\binom {n-i+1}{j}\binom {j} {s}(-1)^{j-s}&=&\binom {n-i+1}{s}\sum_{j=s}^{N}\binom {n-s-i+1} {j-s}(-1)^{j-s}\\
			&=&\binom {n-i+1}{s}\sum_{r=0}^{N-s}\binom {n-s-i+1} {r}(-1)^{r}\\
			&=&\binom {n-i+1}{s}\binom {n-s-i} {n-N-i}(-1)^{N-s},
		\end{eqnarray*}
		where the last equality follows from \eqref{2_lema1}.
		
		Hence, substituting 
		$i=n-N-k$,
		in \eqref{6_tilde} and combining it with the above results, we obtain
		\begin{eqnarray}
			\label{casi_tilde}
			\sum_{s=0}^{N-1}\alpha_{n,n-s}b_{n-s,i}+b_{n+1,i}&=&\frac{\delta_{n}^{(1)} \delta_{n-1}^{(1)} \ldots \delta_{n-N-k+1}^{(1)} }{(N+k+1)!}
			\sum_{s=0}^{N}\binom {N+k+1}{s}\binom {N+k-s} {k}(-1)^{N-s}\delta^{(1)}_{n-s+1} \\
			&=&\frac{\delta_{n}^{(1)} \delta_{n-1}^{(1)} \ldots \delta_{n-N-k+1}^{(1)} }{(N+k+1)!}\binom {N+k+1}{k}\sum_{s=0}^{N}\frac{N+1}{N+k-s+1}\binom {N} {s}(-1)^{N-s}\delta^{(1)}_{n-s+1}\nonumber\\
			&=&\frac{\delta_{n}^{(1)} \delta_{n-1}^{(1)} \ldots \delta_{n-N-k+1}^{(1)} }{(N+k+1)!}\binom {N+k+1}{k}\sum_{s=1}^{N+1}\frac{s}{s+k}\binom {N+1} {s}(-1)^{s+1}\delta^{(1)}_{n-N+s},\nonumber
		\end{eqnarray}
		where the last equality follows from performing the change of variable $s=N-s+1$. 
		
		Now, if we take into account \eqref{deltas},
		\begin{eqnarray}
			\label{sustituir}
			\sum_{s=1}^{N+1}\frac{s}{s+k}\binom {N+1} {s}(-1)^{s+1}\delta^{(1)}_{n-N+s}&=&
			\sum_{s=1}^{N+1}\frac{s}{s+k}\binom {N+1} {s}(-1)^{s+1}\sum_{j=1}^{n-N+s}\binom{n-N+s}{j}j!c_j\nonumber\\
			&=& \sum_{j=1}^Nj!c_j\sum_{s=1}^{N+1}\frac{s}{s+k}\binom {N+1} {s}\binom{n-N+s}{j}(-1)^{s+1}.
		\end{eqnarray}
		(We note that
		$c_{N+1}=0$ and $n\geq N$
		are used in \eqref{sustituir}.)
		
		Applying Lemma \ref{lema3} in \eqref{sustituir} (with $m=N$),
		\begin{equation}
			\label{casi}
			\binom {N+k+1}{k}\sum_{s=1}^{N+1}\frac{s}{s+k}\binom {N+1} {s}(-1)^{s+1}\delta^{(1)}_{n-N+s}= \sum_{j=1}^N
			\binom{n-N-k}{j}
			j!c_j.
		\end{equation}
		In \eqref{casi} we have two possibilities. First, if 
		$n-N-k\le N$
		then
		$
		\displaystyle\binom{n-N-k}{j}=0
		$
		for 
		$j=n-N-k+1,\ldots,N.$
		Second, if 
		$n-N-k> N$
		then
		$c_j=0$
		for $j=N+1,\ldots, n-N-K$. In both cases, \eqref{deltas} leads to
		$$
		\sum_{j=1}^N
		\binom{n-N-k}{j}
		j!c_j= \sum_{j=1}^{n-N-k}
		\binom{n-N-k}{j}
		j!c_j=\delta^{(1)}_{n-N-k}.
		$$
		From this and \eqref{casi_tilde},
		$$
		\sum_{s=0}^{N-1}\alpha_{n,n-s}b_{n-s,i}+b_{n+1,i}=\frac{\delta_{n}^{(1)} \delta_{n-1}^{(1)} \ldots \delta_{n-N-k+1}^{(1)} \delta_{n-N-k}^{(1)} }{(N+k+1)!}=b_{n,i-1},
		$$
		as we wanted to prove.
		
	\end{proof}

	\section{Inverse problem}
	\label{inverso}
	In this section, we assume the sequence $\{\lambda_n\}\subset\mathbb{C}$ and 
	the sequence of monic polynomials $\{P_n\}, n\in\mathbb{N}$, are given,
	where 
	$\deg(P_n)=n$
	for each
	$n\in\mathbb{N}.$
	Then, we define a sequence $\{\delta_n^{(k)}\}, n\in\mathbb{N},$ as
	\begin{equation}
		\label{delta0}
		\delta_n^{(0)}=\lambda_n, \quad n=0,1,\dots,
	\end{equation}
	for $k=1,2,\dots,n$,	
	\begin{equation}
		\label{1*}
		\delta_n^{(k)}=(-1)^k\left|
		\begin{array}{cccccc}
			(\lambda_{n-k}-\lambda_{n-k+1}) b_{n-k+1,n-k}& (\lambda_{n-k}-\lambda_{n-k+2}) b_{n-k+2,n-k}&\cdots&\cdots  &(\lambda_{n-k}-\lambda_{n}) b_{n,n-k}\\ 
			1& b_{n-k+2,n-k+1}&\cdots &\cdots& b_{n,n-k+1}\\ 
			0 & 1&   \cdots& \cdots &b_{n,n-k+2} &\\
			\vdots  & 0  & \ddots &&\vdots \\
			\vdots  &\vdots & \ddots &\ddots &\vdots \\
			0 & 0 &\cdots & 1& b_{n,n-1}
		\end{array}
		\right|,
	\end{equation}
	and, for $k>n$, $\delta_n^{(k)}=0$.
	
	Now, we define a sequence of polynomials $\{a_n\}, n\in\mathbb{N}$, as in \eqref{1111} where
	\begin{equation}
		\label{def_an}
		a_{n,n-k}=\frac{1}{n!}\sum_{i=k}^{n} {n \choose i}(-1)^{n-i}\delta_i^{(k)}\quad \text{ for } k=0,1,\ldots, n\,.
	\end{equation}	
	
	Finally, consider the infinite order differential operator
	
	\begin{equation}
		\label{def_infop}
		L=\sum_{i=0}^{+\infty}a_i(x)\partial_x^i
	\end{equation}

	In the following result, we provide a necessary and sufficient condition for the sequences $\{\lambda_n\}$ and $\{P_n\}$ to be the eigenvalues and eigenpolynomials of a Bochner differential operator defined as in \eqref{1}.
	
	\begin{teo}
		Let $\{\lambda_n\}$ and $\{P_n\}$ be two sequences in the above conditions. Consider the sequences $\{\delta_n^{(k)}\}$ and $\{a_n\}$ defined in terms of $\{\lambda_n\}$ and $\{P_n\}$ as in \eqref{1*} and \eqref{def_an}, respectively. Consider the infinite differential operator $L$ defined in terms of $\{a_n\}$ as in \eqref{def_infop}. Let $N\in \mathbb{N}$. Then, the following statements are equivalent:
		\begin{enumerate}
			\item $a_i(x)=0\,, \forall i>N$, and the sequences $\{\lambda_n\}$ and $\{P_n\}$ satisfy \eqref{11} for $L$.
			\item  \eqref{eq-2} is fulfilled for $N$.
		\end{enumerate}
	\end{teo}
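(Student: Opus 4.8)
The plan is to prove that the eigenvalue relation appearing in statement~1 holds \emph{automatically} for the operator \eqref{def_infop}, so that the asserted equivalence collapses to ``$a_i\equiv0$ for every $i>N$'' $\iff$ \eqref{eq-2}, and then to obtain this reduced equivalence directly from Theorem~\ref{th-main}.

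First I would observe that \eqref{def_an} is exactly \eqref{BB} written for the sequence $\{\delta_n^{(k)}\}$ defined in \eqref{1*}, so by Theorem~\ref{Lema2} the companion relation \eqref{deltas_aes}, namely $\delta_n^{(k)}=\sum_{i=k}^{n}\binom{n}{i}i!\,a_{i,i-k}$, holds as well. Next I would verify that the determinant defining $\delta_n^{(k)}$ in \eqref{1*} is nothing but the unique solution of the triangular system obtained by imposing, for each fixed $n$ and for $m=n,n-1,\dots,0$,
\[
\sum_{k\ge0}\delta_{m+k}^{(k)}\,b_{n,m+k}=\lambda_n b_{n,m},
\]
the sums being finite because $b_{n,s}=0$ for $s>n$; this is a cofactor expansion of \eqref{1*} along its first row, entirely parallel to the proof of Theorem~\ref{teorema_coeficientes} (compare \eqref{LemaHes}), the successive pivots being the entries $b_{n,n}=1$. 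Finally, expanding $LP_n$ in powers of $x$ for the operator \eqref{def_infop} --- a finite computation since $\partial_x^iP_n=0$ for $i>n$ --- and using \eqref{deltas_aes} to recognize that the coefficient of $x^m$ in $LP_n$ equals $\sum_{k\ge0}\delta_{m+k}^{(k)}b_{n,m+k}$, one sees that $LP_n=\lambda_nP_n$ is \emph{equivalent} to the system just displayed; this is the infinite-order analogue of Theorem~\ref{lema1}. Hence \eqref{11} holds for $L$ for every admissible pair $\{\lambda_n\},\{P_n\}$, and the theorem reduces to proving that, for a given $N$, $a_i\equiv0$ for all $i>N$ if and only if \eqref{eq-2} holds.

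The implication $1\Rightarrow2$ is then immediate: if $a_i\equiv0$ for $i>N$ then $L$ is a genuine Bochner operator \eqref{1} of order $N$ whose associated sequence $\{\delta_n^{(k)}\}$ (via \eqref{deltas_aes}) coincides with the one of \eqref{1*}, so Theorem~\ref{th-main} applies and gives \eqref{eq-2}.

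For $2\Rightarrow1$, I would assume \eqref{eq-2}, fix $n>N$, and, using \eqref{def_an}, reduce the claim $a_n\equiv0$ to the assertion that $\sum_{i=k}^{n}\binom{n}{i}(-1)^{n-i}\delta_i^{(k)}=0$ for $k=0,1,\dots,n$. If $k>N$ this is clear, since $\delta_i^{(k)}=0$ for all $i\ge k$ by the second line of \eqref{eq-2}. If $k\le N$, split the sum at $i=N$, substitute the first line of \eqref{eq-2} into each term with $i\ge N+1$, and collect the coefficient of $\delta_\ell^{(k)}$, $\ell=k,\dots,N$, which is
\[
\binom{n}{\ell}(-1)^{n-\ell}+(-1)^{N-\ell}\sum_{i=N+1}^{n}\binom{n}{i}(-1)^{n-i}\binom{i}{\ell}\binom{i-\ell-1}{N-\ell}.
\]
Separating off the term $i=n$, which equals $(-1)^{N-\ell}\binom{n}{\ell}\binom{n-\ell-1}{N-\ell}$, the remaining part $\binom{n}{\ell}(-1)^{n-\ell}+\sum_{i=N+1}^{n-1}(\cdots)$ is precisely the combinatorial identity already established inside the proof of Theorem~\ref{th-main}, which asserts that \eqref{eq-aux1} equals \eqref{eq-aux2}, evaluated with $n-1$ in place of $n$ (legitimate since $n-1\ge N$); it equals $-(-1)^{N-\ell}\binom{n}{\ell}\binom{n-1-\ell}{N-\ell}$. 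Since $\binom{n-\ell-1}{N-\ell}=\binom{n-1-\ell}{N-\ell}$, the two pieces cancel, every coefficient vanishes, and so $a_n\equiv0$ for all $n>N$; together with the eigenvalue relation secured in the previous step, this is statement~1. The only genuinely new computations in the whole argument are the cofactor expansion identifying \eqref{1*} with the triangular solution (routine, modelled on Theorem~\ref{teorema_coeficientes}) and the sign bookkeeping just described; I expect the main --- though mild --- obstacle to be verifying carefully that the $k\le N$ coefficient above does reduce to the identity of Theorem~\ref{th-main} after the shift $n\mapsto n-1$, in particular keeping proper track of the isolated boundary term $i=n$.
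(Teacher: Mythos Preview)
Your proposal is correct and rests on the same two ingredients as the paper's proof---the determinantal definition \eqref{1*} forces the eigenvalue relation, and a combinatorial identity forces $a_n\equiv 0$ for $n>N$---but you organise them more economically. The paper places the verification of \eqref{11} inside the $2\Rightarrow 1$ direction, expanding $\delta_{n+1}^{(s+1)}$ along its \emph{last column} and running an induction on $n$; as you correctly observe, that argument nowhere uses \eqref{eq-2}, so the eigenvalue relation is indeed automatic and the equivalence really does collapse to ``$a_i\equiv 0$ for $i>N$'' $\Leftrightarrow$ \eqref{eq-2}. For the vanishing of $a_{n,n-k}$ when $n>N$ the paper recomputes the needed identity from scratch via Lemma~\ref{Lema33} (see \eqref{eq-aux-7}--\eqref{eq-aux-8}), whereas you recycle the identity \eqref{eq-aux1}$=$\eqref{eq-aux2} already established inside Theorem~\ref{th-main}, shifted by $n\mapsto n-1$; this is legitimate because that identity also holds (trivially, the sum being empty) at $n=N$, covering the boundary case $n=N+1$. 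Your route saves one appeal to Lemma~\ref{Lema33} at the cost of a short sign-bookkeeping step; the paper's route keeps the inverse-problem theorem self-contained.
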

	
	\begin{proof}
		Theorems \ref{Lema2} and \ref{th-main} prove $1\Rightarrow 2$. Then, let us prove $2\Rightarrow 1$.
		
		First, let us prove the operator $L$ has order $N$. That is, we have to prove $a_{n,n-k}=0$ for any $n\geq N+1$ and $0\leq k\leq n$. Then, let $n\geq N+1$ and let $0\leq k\leq n$. From \eqref{def_an},
		
		$$n!a_{n,n-k}=\sum_{i=k}^n\binom{n}{i}(-1)^{n-i}\delta_i^{(k)}=
		\sum_{i=k}^N\binom{n}{i}(-1)^{n-i}\delta_i^{(k)}+\sum_{i=N+1}^n\binom{n}{i}(-1)^{n-i}\delta_i^{(k)}.$$
		Since we are assuming \eqref{eq-2} holds for $N$, we can use it for the above equation and obtain
		$$n!a_{n,n-k}=
		\sum_{i=k}^N\binom{n}{i}(-1)^{n-i}\delta_i^{(k)}+\sum_{i=N+1}^n\binom{n}{i}(-1)^{n-i}\sum_{\ell=k}^N(-1)^{N-\ell}\binom{i}{\ell}\binom{i-\ell-1}{N-\ell}\delta_\ell^{(k)}.$$
		The coefficient of $\delta_\ell^{(k)}$ in the right hand side of
		the above expression is
		\begin{align}
			\label{eq-aux-7}
			&\binom{n}{\ell}(-1)^{n-\ell}+\sum_{i=N+1}^n\binom{n}{i}(-1)^{n-i}(-1)^{N-\ell}\binom{i}{\ell}\binom{i-\ell-1}{N-\ell}\nonumber\\
			&=(-1)^{n-\ell}\frac{n!}{\ell!}\left[\frac{1}{(n-\ell)!}+\sum_{i=N+1}^n(-1)^{N-i}\frac{1}{i!(n-i)!}\frac{i!}{(i-\ell)!}\frac{(i-\ell-1)!}{(N-\ell)!(i-N-1)!}\right]\nonumber\\
			&=(-1)^{n-\ell}\frac{n!}{\ell!}\left[\frac{1}{(n-\ell)!}+\frac{1}{(N-\ell)!}\sum_{i=N+1}^n\frac{(-1)^{N-i}}{(i-\ell)(n-i)!(i-N-1)!}\right].
		\end{align}
		Let us consider now the sum $\sum_{i=N+1}^n\frac{(-1)^{N-i}}{(i-\ell)(n-i)!(i-N-1)!}$ in \eqref{eq-aux-7}. If we perform the change of variable $j=i-(N+1)$, we obtain
		\begin{align}
			\label{eq-aux-8}
			&\sum_{i=N+1}^n\frac{(-1)^{N-i}}{(i-\ell)(n-i)!(i-N-1)!}=
			\sum_{j=0}^{n-N-1}\frac{(-1)^{j+1}}{(j+N-\ell+1)(n-j-N-1)!j!}\nonumber\\
			&=\frac{-1}{(n-N-1)!}\sum_{j=0}^{n-N-1}\frac{(-1)^j}{j+N-\ell+1}\binom{n-N-1}{j}=\frac{-1}{(n-N-1)!}\frac{1}{(n-N)\binom{n-\ell}{n-N}}=-\frac{(N-\ell)!}{(n-\ell)!},
		\end{align}
		where the next-to-last equality follows from Lemma \ref{Lema33}. Replacing \eqref{eq-aux-8} in \eqref{eq-aux-7}, we obtain $a_{n,n-k}=0$.
		
		Now, we need to prove that $\{\lambda_n\}$ and $\{P_n\}$ are the eigenvalues and eigenpolynomials of the operator $L$ defined in terms of the sequence $\{a_n\}$. To do that, we need to check that \eqref{7} holds. Since $b_{n,m+k}=0$ if $m+k>n$, \eqref{7} is equivalent to
		\begin{equation}
			\label{eq-inv}
			\sum_{k=0}^{n-m}\delta_{m+k}^{(k)}b_{n,m+k}=\lambda_n b_{n,m}, \quad m=0,1,\dots,n.
		\end{equation}
		
		Let us prove \eqref{eq-inv} using induction on $n$. For $n=1$, $m$ takes values 0 and 1. For $m=0$, \eqref{eq-inv} is equivalent to $\delta_1^{(1)}=\lambda_1b_{1,0}$, which is true due to \eqref{1*}; for $m=1$, \eqref{eq-inv} is $\delta_1^{(0)}=\lambda_1$, which follows from \eqref{delta0}.
		
		Let $n\in\mathbb{N},\ n>1$, and assume that \eqref{eq-inv} takes place for $m=0,1,\dots,n$. Replacing $n$ by $n+1$, \eqref{eq-inv} becomes
		\begin{equation}
			\label{eq-inv1}
			\sum_{k=0}^{n-m+1}\delta_{m+k}^{(k)}b_{n+1,m+k}=\lambda_{n+1} b_{n+1,m}, \quad m=0,1,\dots,n.
		\end{equation}
		Let us prove \eqref{eq-inv1}. For $m=n+1$, \eqref{eq-inv1} is
		$$\delta_{n+1}^{(0)}=\lambda_{n+1},$$
		which is a direct consequence of \eqref{delta0}.
		
		\noindent	For $m=0,\dots,n$, let $s=n-m$. Then, we can write \eqref{eq-inv1} as
		\begin{equation}
			\label{eq-inv2}
			\sum_{k=0}^{s+1}\delta_{n-s+k}^{(k)}b_{n+1,n-s+k}=\lambda_{n+1} b_{n+1,n-s}, \quad s=0,1,\dots,n.
		\end{equation}
		Following \eqref{1*}, we can expand the determinant $\delta_{n+1}^{(s+1)}$ along its last column and, using the induction hypothesis, we reach
		$$\delta_{n+1}^{(s+1)}=(\lambda_{n+1}-\lambda_{n-s})b_{n+1,n-s}-\sum_{k=1}^s\delta_{n+k-s}^{(k)}b_{n+1,n+k-s},$$
		which is \eqref{eq-inv2}.
		
	\end{proof}

	\section{Conclusions and future work}
	\label{Conclusiones}
	We have introduced a sequence $\{\delta_n^{(k)}\}$ associated to a Bochner differential operator. Some properties of $\{\delta_n^{(k)}\}$ have been proven, including a (simplified) expression for the coefficients of the eigenpolynomials of a Bochner differential operator in terms of the elements of the sequences $\{\delta_n^{(k)}\}$ and an expression for
	$\{\delta_n^{(k)}\}$
	obtained from the $N-k+1$ first terms
	$\delta_k^{(k)},\,\delta_{k+1}^{(k)},\ldots, \delta_N^{(k)}$. Both properties combined lead to an expression for the coefficients of the eigenpolynomials in terms of a smaller amount of elements of the sequences $\{\delta_n^{(k)}\}$. In addition, the inverse problem has been studied, resulting in the derivation of a necessary and sufficient condition (that depends on $\{\delta_n^{(k)}\}$) for its solution.
	Furthermore, we have used some properties of $\{\delta_n^{(k)}\}$ to prove that the Bochner differential operator \eqref{Delta} gives rise to sequences of eigenpolynomials that satisfy an $(N+1)$-term recurrence relation. This approach is part of a larger goal: proving \cite[Conjecture 1.10]{Shapiro2}. To achieve this, two steps remain. First, we must prove that \eqref{operador_otro} also leads to sequences of eigenpolynomials that satisfy an $(N+1)$-term recurrence relation; second, we must prove that no other Bochner differential operator with that property exists. We hope to address this goal in a future work.

	\section*{Declaration of competing interests}
	
	There is no competing interest.

\end{document}